\newcommand{\gtrgtr}{\mathbin{\rotatebox[origin=c]{180}{$\ll$}}}
\newcommand{\yref}{y_{\mbox{\scriptsize\textit{ref}}}}
\newcommand{\wdist}{w_{\mbox{\scriptsize\textit{dist}}}}
\newcommand{\wdistdot}{\dot{w}_{\mbox{\scriptsize\textit{dist}}}}
\newcommand{\wext}{w_{\mbox{\scriptsize\textit{ext}}}}
\newcommand{\mc}[1]{\mathcal{#1}}
\newcommand{\GG}{{\mathcal G}}
\newcommand{\Hinf}{\mc{H}_\infty}
\newcommand{\glacc}{\gl_{\mbox{\footnotesize acc}}}
\newcommand{\citel}[2]{\cite[#2]{#1}}
\newcommand{\Acomp}{A_s}
\newcommand{\Bcomp}{B_s}
\newcommand{\Ccomp}{C_s}
\newcommand{\Qcomp}{Q_s}
\newtheorem{thm}{Theorem}[section]
\newtheorem{lem}[thm]{Lemma}
\newtheorem{ass}[thm]{Assumption}
\theoremstyle{definition}
\newtheorem{rem}[thm]{Remark}
\newcommand{\pmat}[1]{\begin{bmatrix}#1 \end{bmatrix}}
\newcommand{\pmatsmall}[1]{\begin{bsmallmatrix}#1\end{bsmallmatrix}}
\newcommand{\bmatsmall}[1]{\begin{bsmallmatrix}#1\end{bsmallmatrix}}
\newcommand{\eps}{\varepsilon}
\DeclareMathOperator{\re}{Re}
\DeclareMathOperator{\diag}{diag}
\newcommand*{\C}{{\mathbb{C}}}     
\newcommand*{\R}{{\mathbb{R}}}
\newcommand*{\N}{{\mathbb{N}}}
\newcommand*{\Lin}{{\mathcal{L}}}   
\newcommand*{\Dom}{D}   
\newcommand{\ran}{{\mathcal{R}}}   
\renewcommand{\ker}{{\mathcal{N}}}
\newcommand*{\abs} [1]{\lvert#1\rvert}
\newcommand*{\norm}[1]{\lVert#1\rVert}
\newcommand*{\set} [1]{\{#1\}}
\newcommand*{\setm}[2]{\{\,#1\mid#2\,\}}   
\newcommand*{\iprod}[2]{\langle#1,#2\rangle}
\newcommand*{\Lp}[1][p]{L^{#1}}
\newcommand{\Abs}[2][default]{\ifthenelse{\equal{#1}{default}}{\left\lvert#2\right\rvert}{\ldelim{#1}{\lvert}#2\rdelim{#1}{\rvert}}}
\newcommand{\Norm}[2][default]{\ifthenelse{\equal{#1}{default}}{\left\lVert#2\right\rVert}{\ldelim{#1}{\lVert}#2\rdelim{#1}{\rVert}}}
\newcommand*{\Iprod}[3][default]{\ifthenelse{\equal{#1}{default}}{\left\langle#2,#3\right\rangle}{\ldelim{#1}{\langle}#2,#3\rdelim{#1}{\rangle}}}
\newcommand*{\Dualpair}[3][default]{\ifthenelse{\equal{#1}{default}}{\left\langle#2,#3\right\rangle}{\ldelim{#1}{\langle}#2,#3\rdelim{#1}{\rangle}}}
\newcommand*{\List}[2][1]{\set{#1,\ldots,#2}}
\newcommand{\eq}[1]{\begin{align*}#1\end{align*}}
\newcommand{\eqn}[1]{\begin{align}#1\end{align}}
\newcommand{\gs}{\sigma}
\newcommand{\ga}{\alpha}
\newcommand{\gb}{\beta}
\renewcommand{\gg}{\gamma}
\newcommand{\gl}{\lambda}
\newcommand{\gw}{\omega}
\newcommand{\inv}{^{-1}}
\newcommand*{\ddb}[2][1]{\ifthenelse{\equal{#1}{1}}{\frac{d}{d#2}}{\frac{d^{#1}}{d#2^{#1}}}}
\newcommand*{\pd}[3][1]{\ifthenelse{\equal{#1}{1}}{\frac{\partial{#2}}{\partial{#3}}}{\frac{\partial^{#1}{#2}}{\partial#3^{#1}}}}
\newcommand*{\keyterm}[1]{\emph{#1}}
\newtheorem*{RORP}{The Robust Output Regulation Problem}
\begin{document}

  \title[Reduced Order Controller Design for Robust Regulation]{Reduced Order Controller Design for Robust Output Regulation}

\thispagestyle{plain}

\author[L. Paunonen]{Lassi Paunonen}
\address[L. Paunonen]{Mathematics, Faculty of Information Technology and Communication Sciences, Tampere University, PO.\ Box 692, 33101 Tampere, Finland}
\email{lassi.paunonen@tuni.fi}

\author[D. Phan]{Duy Phan}
\address[D. Phan]{Mathematics, Faculty of Information Technology and Communication Sciences, Tampere University, PO.\ Box 692, 33101 Tampere, Finland}
\email{pducduy0811022@gmail.com}

\thanks{The research is supported by the Academy of Finland Grant number 310489 held by L. Paunonen. L. Paunonen is funded by the Academy of Finland Grant number 298182.}

\begin{abstract}
  We study robust output regulation for parabolic partial differential equations and other infinite-dimensional linear systems with analytic semigroups. As our main results we show that robust output tracking and disturbance rejection for our class of systems can be achieved using a finite-dimensional controller and present algorithms for construction of two different internal model based robust controllers. The controller parameters are chosen based on a Galerkin approximation of the original PDE system and employ balanced truncation to reduce the orders of the controllers. In the second part of the paper we design controllers for robust output tracking and disturbance rejection for a 1D reaction--diffusion equation with boundary disturbances, a 2D diffusion--convection equation, and a 1D beam equation with Kelvin--Voigt damping.  
\end{abstract}

\subjclass[2010]{%
93C05, 
93B52 
35K90, 
(93B28) 
}
\keywords{Robust output regulation, partial differential equation, controller design, Galerkin approximation, model reduction.} 

\maketitle

\section{Introduction}
\label{sec:intro}

In the \keyterm{robust output regulation problem} the main objective is to design a dynamic error feedback controller 
so that the output $y(t)$ of the linear infinite-dimensional system
\begin{subequations}
  \label{eq:plantintro}
  \eqn{
    \dot{x}(t)&= Ax(t) + Bu(t) + B_d\wdist(t), ~\, x(0)=x_0\in X\\
    y(t)&= Cx(t) + Du(t) + D_d\wdist(t)
  }
\end{subequations}
on a Hilbert space $X$ converges to a given reference signal $\yref(t)$  despite the external disturbance signal $\wdist(t)$, i.e.,
  \eq{
    \norm{y(t)-\yref(t)}\to 0, \qquad \mbox{as} \quad t\to\infty .
  } 
  In addition, the control is required to be robust in the sense that the designed controller achieves the output tracking and disturbance rejection even under uncertainties and perturbations in the parameters $(A,B,B_d,C,D,D_d)$ of the system (see Section~\ref{sec:RORP} for the detailed assumptions on~\eqref{eq:plantintro}).
The closed-loop system consisting of~\eqref{eq:plantintro} and a dynamic error feedback controller is depicted in Figure~\ref{fig:FBcontrol}.
  In particular the controller only uses the knowledge of the regulation error $e(t)=y(t)-\yref(t)$.

\begin{figure}[h!]
  \begin{center}
  \includegraphics[width=0.7\linewidth]{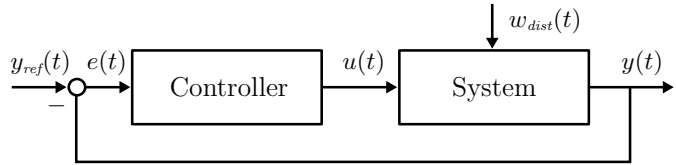}
    \caption{Dynamic error feedback control scheme.}
    \label{fig:FBcontrol}
  \end{center}
\end{figure}

The design of controllers for robust output regulation of infinite-dimen\-sional linear systems has been studied in several references~\cite{Poh82,LogTow97,HamPoh00,RebWei03,HamPoh10,Pau16,Pau17b}, and many articles also study the controller design for output tracking and disturbance rejection without the robustness requirement~\cite{Sch83a,ByrLau00,Deu11,Deu15,XuDub17a}.
In this paper we concentrate on construction of \keyterm{finite-dimensional} low-order robust controllers for control systems~\eqref{eq:plantintro} with distributed inputs and outputs. The motivation for this research arises from the fact that the robust controllers introduced in earlier references~\cite{HamPoh10,Pau16} are necessarily inifinite-dimensional unless the system~\eqref{eq:plantintro} is either exponentially stable or stabilizable by static output feedback.

As the main results of this paper we introduce two finite-dimensional controllers that solve the robust output regulation problem for possibly unstable parabolic PDE systems. The controller design is based on the \keyterm{internal model principle}~\cite{FraWon75a,Dav76,PauPoh10} which characterizes the solvability of the control problem. 
The general structures of the controllers are based on two infinite-dimensional controllers presented in~\cite{HamPoh10} and~\cite{Pau16}, respectively. Both of the infinite-dimensional controllers from~\cite{HamPoh10,Pau16} incorporate 
an observer-type copy of the original system that is used in stabilizing the closed-loop system.
In this paper 
these observer-type parts are replaced with finite-dimensional low-order systems that are constructed based on a Galerkin approximation of the system $(A,B,C,D)$ and subsequent model reduction using balanced truncation. All controller parameters are computed based on a finite-dimensional approximation of $(A,B,C,D)$ and
only involve matrix computations.
In particular, when using the Finite Element Method, both the approximation of the system~\eqref{eq:plantintro} and the model reduction step in the controller construction can be completed efficiently using existing software implementations, and this facilitates straightforward construction of our robust controllers even for complicated PDE systems. The finite-dimensional controllers introduced in this paper
can also be preferable to
the low-gain robust controllers~\cite{HamPoh00,LogTow97,RebWei03} for exponentially stable systems,
since they can typically achieve larger closed-loop stability margins and faster convergence rates of the output.

In the second part of the paper we employ the construction algorithms to design controllers for robust output regulation of selected classes of PDE models --- a 1D reaction--diffusion equation, a 2D reaction--diffusion--con\-vection equation, and a 1D beam equation with Kelvin--Voigt damping.
The general assumptions on the Galerkin approximation scheme used in the controller design have been verified in the literature for several classes of PDE models and the Finite Element approximation schemes used in this paper.

The possibility of using Galerkin approximations in the controller design is based on the theory developed in~\cite{BanKun84,Ito90,Mor94,BanIto97,ItoMor98,XiaBas99,Mor01}. 
Using Galerkin approximations in dynamic stabilization is a well-known and frequently used technique~\cite{Ito90,Mor94b,Cur03}, and in this paper we employ the same methodology in constructing finite-dimensional low-order controllers for robust output regulation. 
In the proofs of our main results we show that the closed-loop systems with our reduced order controllers approximate --- in the sense of graph topology --- closed-loop systems with infinite-dimensional controllers which can be shown to achieve closed-loop stability,
and therefore the controllers achieve
robust output regulation provided that the orders of the approximations are sufficiently high.
The graph topology was first used for the dynamic stabilization problem with Galerkin approximations in~\cite{Mor94b},
  and a detailed theoretic framework for constructing controllers based on balanced truncations was presented in~\cite{Cur03}. 
  Our proofs are especially based on the techniques in~\cite{Mor94,Mor94b}.
Controller construction for robust output regulation using Galerkin approximations was first studied in~\cite{PhaPauMTNS18} for a 1D heat equation with constant coefficients. In this paper we improve and extend the controller design method to be applicable for a larger class of control systems, include model reduction as a part of the design procedure, and consider two different controller structures.

The reference signals $\yref:\R\to \C^p$ and the disturbance signals $\wdist:\R\to \C^{2q+1}$ we consider are of the form 
\begin{subequations}
  \label{eq:yrefwdist}
  \eqn{
    \hspace{-1ex}  \yref(t) &= a_0^1(t) + \sum_{k=1}^q (a_k^1(t) \cos(\gw_k t) + b_k^1(t) \sin(\gw_k t))\\
  \label{eq:yrefwdistwd}
    \hspace{-1ex} \wdist(t) &= a_0^2(t) + \sum_{k=1}^q (a_k^2(t) \cos(\gw_k t) + b_k^2(t) \sin(\gw_k t))
  }
\end{subequations}
for some known frequencies $\set{\gw_k}_{k=0}^q\subset \R$ with $0=\gw_0<\gw_1<\ldots<\gw_q$ and \keyterm{unknown} coefficient polynomial vectors $\set{a_k^j(t)}_{k,j}$ and $\set{b_k^j(t)}_{k,j}$ with real or complex coefficients (any of the polynomials are allowed to be zero). 
We assume the maximum orders of the coefficient polynomial vectors are known, so that
  $a_k^1(t), b_k^1(t)\in \C^p$ and $a_k^2(t),b_k^2(t)\in \C^{m_d}$
are polynomial of order at most $n_k-1$ for each $k$.

\begin{rem}
  In~\eqref{eq:yrefwdist}, $a_0^1(t)$ and $a_0^2(t)$ correspond to the frequency $\gw_0=0$. The constructions of the controllers are carried out with $\gw_0$ being present, but there are situations where tracking of signals with this frequency component can not be achieved (namely, when the system~\eqref{eq:plantintro} has an invariant zero at $0\in\C$). In this situation the construction of the matrices $G_1$, $G_2$, and $K_1$ in Section~\ref{sec:Controllers} can be modified in a straightforward manner to remove this frequency from the controller.
\end{rem}

Throughout the paper we consider distributed control and observation, i.e.,  $B$ and $C$ are bounded linear operators. Also the disturbance input operator $B_d$ is assumed to be bounded,
but under this assumption it is also possible to reject boundary disturbances
for many classes of PDEs as demonstrated in Section~\ref{sec:1Dheat}. Indeed, since $\wdist(\cdot)$ in~\eqref{eq:yrefwdist} is smooth, boundary disturbances can in many situations be written in the form~\eqref{eq:plantintro} with a bounded operator $B_d$ and a modified disturbance signal including the derivative $\wdistdot(\cdot)$~\citel{CurZwa95}{Sec. 3.3}. 
Since $\wdistdot(\cdot)$ is also of the form~\eqref{eq:yrefwdistwd} with the same frequencies and coefficient polynomial vectors of order at most $n_k-1$, the modified disturbance signal belongs to the same original class of signals.
Moreover, since the operators $B_d$ and $D_d$ are not used in any way in the controller construction in Section~\ref{sec:Controllers}, rejection of boundary disturbances can be done without computing $B_d$ and $D_d$ explicitly --- it is sufficient to know such operators exist.  This extremely useful property is based on the fact that a robust internal model based controller will achieve disturbance rejection for \keyterm{any} disturbance input and feedthrough operators $B_d$ and $D_d$ and any signals of the form~\eqref{eq:yrefwdist}.

The paper is organised as follows. In Section~\ref{sec:RORP} we state the standing assumptions, formulate the robust output regulation problem, and summarise the Galerkin approximations and the balanced truncation method. In Section~\ref{sec:Controllers} we present our main results including the construction of the two finite-dimensional robust controllers. The main theorems are proved in Section~\ref{sec:Proofs}. Section~\ref{sec:PDEcontrol} focuses on robust controller design for particular PDE models. Concluding remarks are presented in Section~\ref{sec:Conclusions}. Section~\ref{sec:Appendix} contains helpful lemmata.  

\subsection{Notation}

The inner product on a Hilbert space $X$ is denoted by $\iprod{\cdot}{\cdot}$.
For a linear operator $A:X\rightarrow Y$ 
we denote by $\Dom(A)$, $\ker(A)$ and $\ran(A)$ the domain, kernel and range of $A$, respectively. The space of bounded linear operators from $X$ to $Y$ is denoted by $\Lin(X,Y)$. If \mbox{$A:X\rightarrow X$,} then $\gs(A)$, $\gs_p(A)$, and $\rho(A)$ denote the spectrum, the point spectrum, and the \mbox{resolvent} set of $A$, respectively.
For $\gl\in\rho(A)$ the resolvent operator is given by \mbox{$R(\gl,A)=(\gl -A)^{-1}$}.
For a fixed $\ga\in\R$ we denote
\begin{align*}
  \mc{H}_\infty(\C_\ga^+) = \bigl\{ G: \C_\ga^+ \to \C \bigm|G \,\text{is analytic,} \sup_{s \in \C_\ga^+} |G(s)| < \infty  \bigr\}
\end{align*}
where $\C_\ga^+=\setm{\gl\in\C}{\re\gl>\ga}$.
For $\ga=0$ we use the notation $\mc{H}_\infty=\mc{H}_\infty(\C_0^+)$.
We denote by $M(\mc{H}_\infty)$
the set of matrices with entries in $\mc{H}_\infty$.

\section{Robust Output Regulation, Galerkin Approximation, and Model Reduction}
\label{sec:RORP}

In this section we state our main assumption on the system~\eqref{eq:plantintro} and the controller and formulate the robust output regulation problem. We also review selected important background results concerning Galerkin approximations and balanced truncation.

We consider a control system~\eqref{eq:plantintro} on a Hilbert space $X$, and we assume $V\subset X$ is another Hilbert space 
with a continuous and dense injection $\iota:V \to X$. 
Let $a(\cdot,\cdot):V\times V\to \C$ be a bounded and coercive sesquilinear form, i.e., there exist $c_1,c_2,\lambda_0 > 0$ such that for all $\phi,~\psi \in V$ we have 
\eq{
  & \quad \abs{a(\phi, \psi)} \le c_1 \norm{\phi}_V \norm{\psi}_V
  \\[1ex]
    & \re a (\phi, \phi) + \lambda_0 \norm{\phi}^2_X   \ge c_2 \norm{\phi}^2_V. 
  }
  We assume $A$ is defined by $a(\cdot,\cdot)$ 
so that
\eq{
  &\iprod{-A\phi}{\psi} = a(\phi,\psi), \qquad 
  \forall
  \phi\in \Dom(A),
  \psi\in V,\\
  &\Dom(A)= \setm{\phi\in V}{a(\phi,\cdot) ~ \mbox{has an extension to} ~ X}.
}
  As shown in~\citel{BanIto97}{Sec. 2}, the operator $A : \Dom(A)\subset X\to X$ is such that   $A-\lambda_0 I $ generates an analytic semigroup on $X$.

  In~\eqref{eq:plantintro} $B$, $C$, and $D$ are the \keyterm{input operator}, \keyterm{output operator} and \keyterm{feedthrough operator}, respectively, and $B_d$ and $D_d$ are the input operator and feedthrough operator, respectively, for the disturbance input $\wdist(t)$.
  These operators are assumed to be bounded
  so that $B\in \Lin(U,X)$, $B_d\in \Lin(U_d,X)$, $C\in \Lin(X,Y)$, $D\in \Lin(U,Y)$, and $D_d\in \Lin(U_d,Y)$ where
  $U=\C^m$ or $U=\R^m$ is the input space,
  $U_d=\C^{m_d}$ or $U_d=\R^{m_d}$ is the disturbance input space,
  and $Y=\C^p$ or $Y=\R^p$ is the output space.
We assume the pair $(A,B)$ is exponentially stablizable and $(C,A)$ is exponentially detectable. 
The transfer function of~\eqref{eq:plantintro} is denoted by
\eq{
  P(\gl) = CR(\gl,A)B + D, \qquad \gl\in\rho(A).
}
We make the following standing assumption which is also necessary for the solvability of the robust output regulation problem. The condition means that $(A,B,C,D)$ is not allowed to have invariant zeros at the frequencies $\set{i\gw_k}_{k=0}^q$ in~\eqref{eq:yrefwdist}.

\begin{ass}
  \label{ass:Psurj}
  Let $K\in \Lin(X,U)$ be such that $A+BK$ generates an exponentially stable semigroup. We assume 
  $P_K(i\gw_k) = (C+DK)R(i\gw_k,A+BK)B+D\in \C^{p\times m}$ is surjective for every $k\in \List[0]{q}$.  
\end{ass}

Due to standard operator identities, the surjectivity of $P_K(i\gw_k)$ is independent of the choice of the stabilizing feedback operator $K$. Moreover, for any $k\in \List[0]{q}$ for which $i\gw_k\in\rho(A)$ the matrix $P_K(i\gw_k)$ is surjective if and only if $P(i\gw_k)$ is surjective.

  We consider the design of internal model based error feedback controllers of the form
  \begin{subequations}
    \label{eq:FeedCon}
    \eqn{
      \dot{z}(t) &= \GG_1 z(t) + \GG_2 e(t)\\
      u(t) &= K z(t)}
  \end{subequations}
  where $e(t) = y(t)- y_{ref}(t)$ is the \keyterm{regulation error}, $\GG_1: \Dom(\GG_1)\subset Z\to Z$ generates a strongly continuous semigroup on $Z$, $\GG_2\in \Lin(Y,Z)$, and $K\in \Lin(Z,U)$.
  Letting $x_e(t)=(x(t),z(t))^T$ and $\wext(t)=(\wdist(t),\yref(t))^T$,
  the system and the controller can be written together as a \keyterm{closed-loop system} on the Hilbert space $X_e=X\times Z$ (see~\cite{HamPoh10,PauPoh10} for details)
    \eq{
      \dot{x}_e(t) &= A_e x_e(t) + B_e \wext(t)
      , \qquad x_e(0)=x_{e0}\\
      e(t) &= C_e x_e(t) + D_e\wext(t)
    }
  where $x_{e0}=(x_0,z_0)^T$ and 
  \eq{
    A_e &= \pmat{A&BK\\\mc{G}_2C&\mc{G}_1+\GG_2 DK}, \quad &B_e &= \pmat{B_d&0\\\mc{G}_2 D_d&-\mc{G}_2}, \\ 
    \quad C_e &= \pmat{C,\;DK}, \quad &D_e &= \pmat{D_d,\;-I}.
  }
  The operator $A_e$ generates a strongly continuous semigroup $T_e(t)$ on $X_e$.

\begin{RORP}
  Choose  $(\mc{G}_1,\mc{G}_2,K)$ in such a way that the following are satisfied:
\begin{itemize}
  \setlength{\itemsep}{.5ex}
  \item[\textup{(a)}] The semigroup $T_e(t)$ is exponentially stable.
  \item[\textup{(b)}] 
    There exists $M_e,\gw_e>0$ such that for all initial states $x_0\in X$ and $z_0\in Z$ and for all signals $\wdist(t)$ and $\yref(t)$ of the form~\eqref{eq:yrefwdist}
we have
\eqn{
\label{eq:errintconv}
\norm{y(t)-\yref(t)}\leq M_e e^{-\gw_e t} (\norm{x_{e0}}+\norm{\Lambda}).
}
  where $\Lambda$ is a vector containing the coefficients of the polynomials $\set{a_k^j(t)}_{k,j}$ and $\set{b_k^j(t)}_{k,j}$ in~\eqref{eq:yrefwdist}.
\item[\textup{(c)}] When
  $(A,B,B_d,C,D,D_d)$ are perturbed to $(\tilde{A},\tilde{B},\tilde{B}_d,\tilde{C},\tilde{D},\tilde{D}_d)$ 
  in such a way that the perturbed closed-loop system remains exponentially stable, 
  then for all $x_0\in X$ and $z_0\in Z$ and for all signals $\wdist(t)$ and $\yref(t)$ of the form~\eqref{eq:yrefwdist}
  the regulation error satisfies~\eqref{eq:errintconv} for some modified constants $\tilde{M}_e,\tilde{\gw}_e>0$.
\end{itemize}
\end{RORP}

  The \keyterm{internal model principle}~\cite[Thm. 6.9]{PauPoh10} implies that in order to achieve robust output tracking of the reference signal $\yref(t)$, it is both necessary and sufficient that the following are satisfied.
\begin{itemize}
  \item The controller~\eqref{eq:FeedCon} incorporates \keyterm{an internal model} of
    the reference and disturbance signals in~\eqref{eq:yrefwdist}.
  \item The semigroup $T_e(t)$ generated by
    $A_e$ is exponentially stable.
\end{itemize}
As shown in Section~\ref{sec:Controllers}, the internal model property of the controller can be guaranteed by choosing a suitable structure for the operator $\GG_1$. The rest of structure and parameters of the controller are then chosen so that the closed-loop system becomes exponentially stable.

\subsection{Background on Galerkin Approximations}
\label{sec:Galbackground}

  Let $V^N$ be a sequence of finite dimensional subspaces of $V $ and
  let $P^N$ be the orthogonal projection of $X$ onto $V^N$.
Throughout the paper we assume the approximating subspaces $(V^N)$ have the property that any element $\phi\in V$ can be approximated by elements in $V^N$ in the norm on $V$, i.e.,
\eqn{
  \label{eq:Galerkinassumption}
  \forall \phi \in V\,\exists (\phi^N)_N, \,\phi^N\in V^N: \quad \norm{\phi^N-\phi}_V \stackrel{N\to\infty}{\longrightarrow} 0 .
}
We define the approximations $A^N:~V^N \to V^N$ of $A$ by 
  \eq{
    \langle -A^N \phi, \psi  \rangle = a (\phi, \psi) \text{~~for all~~} \phi, \psi \in V^N, 
  }
  that is, $A^N$ is defined via the restriction of $a(\cdot,\cdot)$ to $V^N \times V^N$. 
  For $B \in \Lin (U, X)$
  we define $B^N \in \Lin(U, V^N )$ by 
  \eq{
    \langle B^N u, \psi  \rangle = \iprod{u}{ B^* \psi }\text{~~for all~~} 
    \psi \in V^N, 
  }
  and $C^N \in \Lin(V^N, Y)$ is defined as the restriction of $C\in \Lin(X,Y)$ onto $V^N$. 
  Note that computing the Galerkin approximation of $B_d\in \Lin(U_d,X)$ is not necessary.

  \begin{lem}
    \label{lem:GTconv}
    Under the standing assumptions on $A$ and the approximating finite-dimensional subspaces $V^N$, the following hold.
    \begin{itemize}
      \item[\textup{(a)}] If $\tilde{B}\in \Lin(\C^{m_0},X)$ and $\tilde{C}\in \Lin(X,\C^{m_0})$, then
	\eq{
	  \hspace{-3ex}  \norm{P^NR(\gl,A+\tilde{B}\tilde{C})x-R(\gl,A^N+\tilde{B}^N\tilde{C}^N)P^Nx}\stackrel{N\to\infty}{\longrightarrow} 0 
	} 
	for all $\gl\in\rho(A+\tilde{B}\tilde{C})$ and $x\in X$. 
      \item[\textup{(b)}]  
	Let $\tilde{B}\in \Lin(\C^{m_0},X)$ and $\tilde{C}\in \Lin(X,\C^{p_0})$ be such that
	$(A,\tilde{B},\tilde{C})$
	is exponentially stabilizable and detectable.
	If $(\tilde{B}_0^N)_N$ and $(\tilde{C}_0^N)_N$ are two sequences such that 
	$\tilde{B}_0^N\in \Lin(\C^{m_0},V^N)$ and $\tilde{C}_0^N\in \Lin(V^N,\C^{p_0})$ for all $N$ and 
	\eq{
	  \hspace{-2ex}
	  \norm{\tilde{B}_0^N-\tilde{B}}_{\Lin(U,X)}\to 0
	  \quad \mbox{and} \quad
	  \norm{\tilde{C}_0^NP^N-\tilde{C}}_{\Lin(X,Y)}\to 0
	}
	as $N\to \infty$, then $\tilde{C}_0^N R(\cdot,A^N)\tilde{B}_0^N$ converge to the transfer function $\tilde{C}R(\cdot,A)\tilde{B}$ in the graph topology of $M(\Hinf)$ as $N\to \infty$.
    \end{itemize}
  \end{lem}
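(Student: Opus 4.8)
The plan is to prove both parts by first establishing the core resolvent-convergence estimate in part (a), and then bootstrapping it to the graph-topology statement in part (b) using the standard coprime-factorization machinery. For part (a), I would begin from the known convergence of the Galerkin approximations of the unperturbed operator: the hypotheses on $a(\cdot,\cdot)$ and the approximation property \eqref{eq:Galerkinassumption} yield, via the results of \cite{BanIto97,BanKun84,Ito90}, that $P^N R(\gl,A)x \to R(\gl,A)x$ and more precisely that $R(\gl,A^N)P^N \to R(\gl,A)$ strongly for $\gl$ in the resolvent set (and uniformly on compact subsets away from the spectrum). The operators $\tilde B^N = P^N \tilde B$ and $\tilde C^N = \tilde C|_{V^N}$ satisfy $\tilde B^N \to \tilde B$ in norm and $\tilde C^N P^N \to \tilde C$ in norm, since $\tilde B, \tilde C$ are bounded and finite-rank in the relevant factor and $P^N \to I$ strongly (strong convergence upgrades to norm convergence when composed with a fixed finite-rank operator). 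Then I would write the second resolvent-type identity relating $R(\gl, A + \tilde B\tilde C)$ and $R(\gl, A^N + \tilde B^N \tilde C^N)P^N$: expanding both around the unperturbed resolvents, the difference is controlled by (i) the already-known convergence of $R(\gl,A^N)P^N$ to $R(\gl,A)$, (ii) the norm convergence of $\tilde B^N, \tilde C^N P^N$, and (iii) uniform boundedness of the perturbed finite-dimensional resolvents. Point (iii) is where care is needed: one must check $\gl \in \rho(A^N + \tilde B^N\tilde C^N)$ for $N$ large with a uniform bound on the resolvent norm, which follows from a Neumann-series perturbation argument off $R(\gl,A^N)$ once $N$ is large enough that the perturbation terms are small relative to $\|R(\gl,A^N)P^N\|$, which is itself uniformly bounded by the convergence in (i).

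For part (b), the strategy is the classical one from \cite{Mor94,Mor94b,Cur03}: exhibit right (or left) coprime factorizations of the limit transfer function $\tilde C R(\cdot,A)\tilde B$ over $M(\Hinf)$ and of the approximants $\tilde C_0^N R(\cdot, A^N)\tilde B_0^N$, and show the factors converge in $M(\Hinf)$-norm; convergence of a coprime factorization in $\Hinf$ together with a uniform Bezout identity is precisely convergence in the graph topology. Since $(A,\tilde B,\tilde C)$ is exponentially stabilizable and detectable, pick $K_0 \in \Lin(X,\C^{m_0})$ and $L_0 \in \Lin(\C^{p_0},X)$ making $A+\tilde B K_0$ and $A + L_0\tilde C$ exponentially stable; the associated factorization has numerator and denominator built from $R(\cdot, A+\tilde B K_0)$, $\tilde B$, $\tilde C$, $K_0$, and the analogous observer-based pieces, all of which lie in $\Hinf$. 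For the approximants one uses the Galerkin versions $K_0^N := K_0|_{V^N}$, $L_0^N := P^N L_0$; part (a), applied with the stabilizing perturbations $\tilde B K_0$ and $L_0\tilde C$ in place of $\tilde B\tilde C$, gives that the corresponding finite-dimensional resolvents $R(\cdot, A^N + \tilde B_0^N K_0^N)P^N$ converge strongly to $R(\cdot, A + \tilde B K_0)$, and then the hypothesized norm convergence $\tilde B_0^N \to \tilde B$, $\tilde C_0^N P^N \to \tilde C$ turns each factor of the $N$th factorization into a norm-convergent sequence in $\Hinf$ — one has to check this convergence is uniform on the closed right half-plane, which reduces to uniform-in-$N$ boundedness of the stabilized resolvents on $\overline{\C_0^+}$, i.e. a uniform exponential-stability bound for $A^N + \tilde B_0^N K_0^N$, which is part of the background Galerkin theory (the approximations inherit uniform stability margins). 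Finally, the Bezout identity for the $N$th factorization converges to that for the limit, so for $N$ large the Bezout pair is invertible in $M(\Hinf)$, giving convergence in the graph topology.

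The main obstacle I anticipate is the uniform control in part (a) — specifically, establishing that $\gl \in \rho(A^N + \tilde B^N \tilde C^N)$ for all large $N$ together with a uniform bound $\sup_N \|R(\gl, A^N + \tilde B^N\tilde C^N)\| < \infty$. Strong convergence $R(\gl,A^N)P^N \to R(\gl,A)$ alone does not immediately give this for the \emph{perturbed} operators, because the perturbation $\tilde B^N\tilde C^N$, while norm-convergent, need not be small; the resolution is to route the argument through the stabilized operator $A^N + \tilde B_0^N K_0^N$ (for which uniform stability, hence uniform resolvent bounds on $\overline{\C_0^+}$, is available from the Galerkin literature) and then treat $\tilde B^N\tilde C^N - \tilde B_0^N K_0^N$ as a finite-rank perturbation handled by an identity plus the already-established convergence, rather than trying to perturb directly off $A^N$. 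A secondary technical point is the passage from pointwise-in-$\gl$ convergence to convergence uniform over $\overline{\C_0^+}$ needed for $\Hinf$-norm convergence of the factorization; this is handled by noting all the factors are analytic and bounded on a half-plane $\C_{-\delta}^+$ for some uniform $\delta>0$ (again from uniform stability margins) and invoking a normal-families / maximum-modulus argument together with decay as $|\gl|\to\infty$.
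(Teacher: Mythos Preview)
Your overall strategy matches the paper's: part~(a) via resolvent convergence for $A^N$ plus a perturbation argument, part~(b) via right coprime factorizations built from a stabilizing feedback and $M(\Hinf)$-convergence of the factors. Two points where the paper's execution is simpler than what you outline are worth noting.

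For part~(a), the obstacle you flag (uniform boundedness of $R(\gl,A^N+\tilde B^N\tilde C^N)$) does not require routing through a stabilized operator. Because $\tilde B$ and $\tilde C$ have finite-dimensional range and domain, the Woodbury-type identity
\eq{
R(\gl,A+\tilde B\tilde C)=R(\gl,A)+R(\gl,A)\tilde B\,(I-\tilde C R(\gl,A)\tilde B)^{-1}\tilde C R(\gl,A)
}
reduces the perturbed resolvent to the unperturbed one plus a term involving the inverse of a finite matrix. Since $\tilde C^N R(\gl,A^N)\tilde B^N\to \tilde C R(\gl,A)\tilde B$ (a matrix limit) and the limit matrix $I-\tilde C R(\gl,A)\tilde B$ is invertible precisely when $\gl\in\rho(A+\tilde B\tilde C)$, uniform boundedness and convergence follow directly. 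The paper compresses this to ``the resolvent identity and standard perturbation formulas.''

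For part~(b), rather than deducing $\Hinf$-convergence of the stabilized transfer functions from pointwise resolvent convergence plus a normal-families argument, the paper invokes \cite[Thm.~4.2 \& Cor.~4.3]{Mor94} directly: those results already give $\tilde C P^N R(\cdot,A^N+P^N\tilde B K P^N)P^N\tilde B\to \tilde C R(\cdot,A+\tilde B K)\tilde B$ in $M(\Hinf(\C_{-\eps}^+))$ for some $\eps>0$, using the analytic-semigroup structure. One then passes from $P^N\tilde B$ to $\tilde B_0^N$ and from $\tilde C P^N$ to $\tilde C_0^N$ by a resolvent perturbation estimate uniform on $\C_{-\eps}^+$. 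Your normal-families route would work, but the decay-at-infinity step needs the uniform sectorial resolvent bound explicitly; citing \cite{Mor94} packages all of that. Also, only the stabilizing $K$ is used (right coprime factorization); the output injection $L_0$ is not needed.
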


  \begin{proof}
    It is shown in~\citel{Mor94}{Thm. 5.2} that 
    \eq{
      \norm{P^NR(\gl,A)x-R(\gl,A^N)P^Nx}\stackrel{N\to\infty}{\longrightarrow} 0 \qquad \forall x\in X
    }
    for some $\gl\in\rho(A)$. Since $\tilde{B}^N \tilde{C}^N P^N\to \tilde{B}\tilde{C}$ strongly as $N\to\infty$, the resolvent identity and standard perturbation formulas imply that part (a) is true.

     To prove part (b), 
    let $K\in \Lin(X,U)$ be such that $A+ \tilde{B}K$ is exponentially stable.
  Then by~\citel{Mor94}{Thm. 5.2--5.3} and standard perturbation theory $A^N+\tilde{B}_0^N KP^N$ are uniformly exponentially stable for large $N$. The functions 
$\tilde{C}_0^N R(\cdot,A^N)\tilde{B}_0^N$ and $\tilde{C}R(\cdot,A)\tilde{B}$ have right coprime factorizations in $M(\Hinf)$ given by
\eq{
 & \tilde{C}_0^N R(\cdot,A^N)\tilde{B}_0^N 
   = \tilde{C}_0^N R(\cdot,A^N+\tilde{B}_0^N KP^N)\tilde{B}_0^N \\
   &\hspace{3.3cm} \times(I+KP^N R(\cdot,A^N+\tilde{B}_0^N KP^N)\tilde{B}_0^N)\inv \\
&\tilde{C}R(\cdot,A)\tilde{B} 
  = \tilde{C} R(\cdot,A+\tilde{B} K)\tilde{B} (I+K R(\cdot,A+\tilde{B} K)\tilde{B})\inv.
}
To conclude that $\tilde{C}_0^N R(\cdot,A^N)\tilde{B}_0^N$ converges to $\tilde{C}R(\cdot,A)\tilde{B}$ in the graph topology, it suffices to show that 
$\tilde{C}_0^N R(\cdot,A^N+\tilde{B}_0^N KP^N)\tilde{B}_0^N$ and $KP^N R(\cdot,A^N+\tilde{B}_0^N KP^N)\tilde{B}_0^N$ converge to 
$\tilde{C} R(\cdot,A+\tilde{B} K)\tilde{B}$ and $K R(\cdot,A+\tilde{B} K)\tilde{B}$ in $M(\Hinf)$, respectively. We will only show the convergence of 
$\tilde{C}_0^N R(\cdot,A^N+\tilde{B}_0^N KP^N)\tilde{B}_0^N$ since the second convergence can be shown analogously.

   By~\citel{Mor94}{Thm. 4.2 \& Cor. 4.3} the transfer functions
   $\tilde{C}P^NR(\cdot,A^N+P^N \tilde{B}KP^N)P^N \tilde{B}$ converge to $\tilde{C}R(\cdot,A+\tilde{B}K) \tilde{B}$ in $M(\Hinf(\C_{-\eps}^+))$ for some $\eps>0$.
Standard perturbation theory implies
that for small $\eps>0$ we also have
$\sup_{\re \gl<-\eps}\norm{R(\gl,A^N+\tilde{B}_0^NKP^N)-R(\gl,A^N+P^N\tilde{B}KP^N)}\to 0$ 
as $N\to \infty$.
Together with the convergences of $\tilde{B}_0^N$ and $\tilde{C}_0^N$ and 
the triangle inequality it is easy to show that
$\tilde{C}_0^N R(\cdot,A^N+\tilde{B}_0^N KP^N)\tilde{B}_0^N$ converges to 
$\tilde{C} R(\cdot,A+\tilde{B} K)\tilde{B}$ in $M(\Hinf)$. This completes the proof.
  \end{proof}

\subsection{Model Reduction via Balanced Truncation}
\label{sec:ROM}

  We use balanced truncation~\cite{Moo81, PerSil82} 
  to reduce the order of our controllers.
  For a general minimal and stable finite-dimensional system $(A,B,C)$ on $\C^N$ the reduced order model $(A^r,B^r,C^r)$ on $\C^r$ is computed as follows~\citel{BenFas13}{Sec. 2.1}.
  \begin{itemize}
    \item[(1)] Find a minimal ``internally balanced realization'' $(A_b,B_b,C_b)$
      of  $(A, B,$ $ C)$ as described in~\citel{BenFas13}{Sec. 2.1}.
    \item[(2)]
The controllability Gramian $\Sigma_B\geq 0$ and the observability Gramian $\Sigma_C\geq 0$ of $(A_b,B_b,C_b)$, defined as the solutions of 
      $$
      \begin{cases}
	A_b\Sigma_B+\Sigma_B A_b^\ast=-B_bB_b^\ast\\
	A_b^\ast\Sigma_C+\Sigma_C A_b=-C_b^\ast C_b,
      \end{cases}
      $$
      have the property $\Sigma_B=\Sigma_C=\diag(\sigma_1,\sigma_2,\ldots,\sigma_N)$ where $\sigma_1\geq \sigma_2\geq\cdots\geq \sigma_N>0$ are the \keyterm{Hankel singular values} of $(A,B,C)$.
\item[(3)]
  If we write
  \eq{
    A_b = \pmat{A^r&A_b^{12}\\A_b^{12}&A_b^{22}}, \quad B_b = \pmat{B^r\\B_b^2}, \quad C_b = \pmat{C^r, \; C_b^2}
  }
  where $A^r\in \C^{r\times r}$, $B^r\in \C^{r\times m}$ and $C^r \in\R^{p\times r}$, then $(A^r,B^r,C^r)$ is the desired reduced order model.
  \end{itemize}

\begin{lem}
\label{lem:ROMconv}
The distance in the graph topology between the stable system $(A,B,C)$ on $\C^N$
and its balanced truncation $(A^r,B^r,C^r)$ satisfies 
\eq{
  d(CR(\cdot,A)B,C^rR(\cdot,A^r)B^r)\leq M \sum_{k=r+1}^{N}\sigma_k
}
for some constant $M> 0$ independent of $r\in \List{N}$.
\end{lem}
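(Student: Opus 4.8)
The plan is to relate the graph-topology distance between a stable finite-dimensional system and its balanced truncation to the classical $\Hinf$ error bound for balanced truncation. Recall that for a stable, minimal realization the balanced truncation error bound of Enns/Glover states that
\[
\Norm{CR(\cdot,A)B - C^rR(\cdot,A^r)B^r}_{M(\Hinf)} \le 2\sum_{k=r+1}^N \sigma_k,
\]
and moreover the truncated system $(A^r,B^r,C^r)$ is itself stable. Since both transfer functions lie in $M(\Hinf)$ — i.e. they are (matrix-valued) stable rational functions — they admit right coprime factorizations over $M(\Hinf)$ of a particularly trivial kind, namely $(N,D)$ with $D=I$ and $N$ equal to the transfer function itself. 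The graph-topology distance $d(\cdot,\cdot)$ between two plants is (up to a fixed constant) bounded by the $M(\Hinf)$-distance of a suitable pair of normalized or merely bounded coprime factorizations; when both factor pairs can be taken with $D=I$, this reduces essentially to the $M(\Hinf)$-distance of the transfer functions themselves. Combining these two facts gives the claimed bound with $M=2$ (or a slightly larger universal constant, depending on the precise normalization of $d$ used in the paper).

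Concretely I would proceed as follows. First I would recall the definition of the graph topology / graph metric $d$ being used (as in \cite{Cur03} or the standard references), which measures the distance between the graphs of the two input-output maps, equivalently the gap between the ranges of $\bmat{D\\N}$ for coprime factorizations $G=ND^{-1}$. Second, I would observe that for a stable system $G=CR(\cdot,A)B$ one may take the coprime factorization $N=G$, $D=I$; both factors are in $M(\Hinf)$ and they are trivially right coprime (Bézout identity with $X=I$, $Y=0$). The same applies to the truncated system, which is stable by the balanced truncation theory. Third, I would invoke the general fact that if two plants have coprime factorizations $(N_1,I)$ and $(N_2,I)$ then $d(G_1,G_2) \le c\,\Norm{N_1-N_2}_{M(\Hinf)}$ for an absolute constant $c$ — this is immediate from the formula for $d$ in terms of factorizations, since the ``denominator'' parts already coincide. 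Fourth, I would apply the Enns--Glover $\Hinf$ error bound $\Norm{N_1-N_2}_{M(\Hinf)} \le 2\sum_{k=r+1}^N\sigma_k$, which holds uniformly in $r$. Chaining these inequalities yields the lemma with $M=2c$, a constant depending only on the normalization of the graph metric and not on $r$, $N$, or the particular system.

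The main obstacle — really the only nontrivial point — is matching the particular definition of the graph-topology distance $d$ used here (inherited from \cite{Cur03,Mor94b}) with the estimate $d(G_1,G_2)\le c\,\Norm{N_1-N_2}_{M(\Hinf)}$ for factorizations sharing the denominator $I$. If the paper's $d$ is defined via the gap between subspaces $\ran\bmat{I\\N_1}$ and $\ran\bmat{I\\N_2}$, this inequality is a short computation: the orthogonal projections onto these graphs differ in norm by something controlled by $\Norm{N_1-N_2}_{M(\Hinf)}$ times a factor depending on $\Norm{N_i}_{M(\Hinf)}$, and the latter is itself bounded (by $\Norm{CR(\cdot,A)B}$ plus the error bound). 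Care is needed to keep this prefactor from depending on $r$, but since $\Norm{C^rR(\cdot,A^r)B^r}_{M(\Hinf)} \le \Norm{CR(\cdot,A)B}_{M(\Hinf)} + 2\sum_{k>r}\sigma_k \le \Norm{CR(\cdot,A)B}_{M(\Hinf)} + 2\operatorname{tr}\Sigma_B$ uniformly in $r$, the constant $M$ can indeed be chosen independent of $r$. A short remark reconciling $d$ with the coprime-factorization formula, plus a citation to the Enns--Glover bound, completes the argument.
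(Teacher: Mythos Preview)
Your approach is correct and is exactly the paper's argument: invoke the Enns--Glover $M(\Hinf)$ error bound for balanced truncation and use that for stable systems the graph-topology distance and the $M(\Hinf)$-norm are equivalent. The paper's proof is a one-sentence appeal to these two facts, whereas you additionally spell out why the resulting constant is independent of $r$, which is a welcome clarification but not a different route.
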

\begin{proof}
  The convergence in the graph topology follows from the corresponding $M(\Hinf)$-error bound~\cite{Enn84} and the fact that for stable systems the distance in the graph topology and $M(\Hinf)$-norm are equivalent. 
\end{proof}

\begin{rem}
  Improved numerical stability of the model reduction algorithm can be achieved by omitting the explicit computation of the balanced realization
   and instead using a ``balancing-free'' method
  such those in~\cite{Var91} (\texttt{balred} in Matlab)
  or~\cite{SafChi90} (\texttt{hankelmr} in Matlab).
Both of these methods produce reduced order models which satisfy the estimate in Lemma~\ref{lem:ROMconv}.  As demonstrated by the proofs in Section~\ref{sec:Proofs}, the balanced truncation can be replaced by any other model reduction method that 
approximates  
a stable finite-dimensional system in the $M(\Hinf)$-norm.
\end{rem}

\section{Finite-Dimensional Robust Controller Design}
\label{sec:Controllers}

In this section we present algorithms for constructing two finite-dimen\-sional reduced order controllers that solve the robust output regulation problem. 
The constructions use the following data:
\begin{itemize}
  \item Frequencies $\set{\gw_k}_{k=1}^q$ of the reference and disturbance signals~\eqref{eq:yrefwdist}.
  \item Maximal orders $n_k-1$ of the coefficient polynomials 
$a_k^1(t), a_k^2(t), b_k^1(t)$, and $b_k^2(t)$
associated to each $\gw_k$ in~\eqref{eq:yrefwdist}.
\item The dimension of the output space $\dim Y=p$
\item Galerkin approximations $(A^N,B^N,C^N)$ of~\eqref{eq:plantintro}.
\item The values $P(i\gw_k)$ of the transfer function through the invertibility condition of $P(i\gw_k)K_1^{1k}$  (only for the dual observer-based controller when $\dim Y<\dim U$).
\end{itemize}

The construction does not use any information on the disturbance operators $B_d$ and $D_d$ or knowledge of the phases and amplitudes of $\yref(\cdot)$ and $\wdist(\cdot)$.
Indeed, robustness guarantess that the same controller will achieve output tracking and disturbance rejection for \keyterm{any} operators $B_d$ and $D_d$, and for all coefficient polynomials $a_k^1(t), a_k^2(t), b_k^1(t)$, and $b_k^2(t)$ of orders at most $n_k-1$.

In the constructions, the role of the component $G_1$ of the system matrix $\mc{G}_1$ is to guarantee that the controller contains a suitable internal model of the signals~\eqref{eq:yrefwdist}. Expressed in terms of spectral properties, the internal model requires that \textit{$i\gw_k\in\gs_p(\mc{G}_1)$ for all $k\in \List[0]{q}$ and $\mc{G}_1$ has 
at least $p=\dim Y$ independent Jordan chains of length greater than or equal to $n_k$ associated to each eigenvalue $i\gw_k$} (see~\citel{Pau16}{Def. 4}).
The steps following the choice of $G_1$ fix the remaining parameters of the controllers in such a way that the closed-loop system becomes exponentially stable. The choices of the parameters are based on solutions of \textit{finite-dimensional} algebraic Riccati equations involving the Galerkin approximation of~\eqref{eq:plantintro}. 
Increasing the sizes of the parameters $\ga_1,\ga_2\geq 0$ improves the stability margin of the closed-loop system and leads to faster convergence rate for the output, but choosing too large values often causes numerical issues in solving the Riccati equations.
In the final part of the algorithms the order of the finite-dimensional controller is reduced using balanced truncation.

The construction does not give precise bounds for the sizes of the Galerkin approximation or the model reduction, but instead only guarantees that robust output regulation is achieved for approximations of sufficiently high orders. 
As seen in Section~\ref{sec:Proofs}, the key requirement on the orders of these approximations is the ability of the reduced order controller to approximate the behaviour of a full infinite-dimensional observer-based robust controller.
  As Lemma~\ref{lem:ROMconv} indicates, the validity of the reduced order approximation in the graph topology depends on the decay of the Hankel singular values.  
  While for some particular finite-dimensional systems reduction may be impossible (i.e., only the choice $r=N$ is possible for achieving a given accuracy), the Hankel singular values of Galerkin approximations of parabolic PDE systems typically decay fairly rapidly and because of this reduction is usually possible.

The main results, Theorems~\ref{thm:OBScontr} and~\ref{thm:DOBScontr}, confirm that the constructed controllers solve the robust output regulation problem. 
The proofs of the theorems are presented in Section~\ref{sec:Proofs}.
The proofs also show that the Riccati equations in Step~3 can be solved approximately in order to improve computational efficiency,  as long as the approximation scheme is such that the approximation errors of $K^N$ and $L^N$ are small.

\subsection{Observer-Based Finite-Dimensional Controller}
\label{sec:OBScontr}

Our first finite-dimensional robust controller is of the form
\begin{subequations}
  \label{eq:FinConObs}
  \eqn{
    \dot{z}_1(t)&= G_1z_1(t) + G_2 e(t)\\
  \label{eq:FinConObs2}
    \dot{z}_2(t)&= (A_L^r+B_L^rK_2^r)z_2(t) + B_L^r K_1^N z_1(t) -L^r e(t)\\
    u(t)&= K_1^N z_1(t) + K_2^rz_2(t) 
  }
\end{subequations}
with state $(z_1(t),z_2(t))^T\in Z:= Z_0\times \C^r$ and input $e(t)=y(t)-\yref(t)$.
  The matrices $(G_1,G_2,A_L^r,B_L^r,K_1^N,K_2^r,L^r)$ are chosen using the algorithm below.
  More precisely,
$(G_1,G_2)$ are as in Step~1, $K_1^N$ is as in Step~3, and $(A_L^r,B_L^r,L^r,K_2^N)$ are as in Step~4.  
The parts $G_1,G_2,K_1^N$ are \keyterm{the internal model} in the controller.
The terminology ``observer-based controller'' arises from the property that the finite-dimensional subsystem~\eqref{eq:FinConObs2} 
approximates 
(in a certain sense)
a full infinite-dimensional observer for~\eqref{eq:plantintro}.

\smallskip

\noindent\textbf{PART I. The Internal Model}

\smallskip

\noindent \textbf{Step 1:}
  We choose $Z_0=Y^{n_0}\times Y^{2n_1} \times \ldots \times Y^{2n_q}$, $G_1 = \diag(J_0^Y, \ldots, J^Y_q)\in \Lin(Z_0)$, and 
  $G_2=(G_2^k)_{k=0}^q \in \Lin(Y,Z_0)$. 
  The parts of $G_1$ and $G_2$ are chosen as follows.
  For $k=0$, let
\begin{align*}
J^Y_0 = \pmat{
0_p  & I_p  	&  			&  \\
      	&  0_p & \ddots	&  \\
      	&			& \ddots	& I_p    \\
      	&			& 			& 0_p
} 
, \qquad 
G_2^0 = \pmat{0_p\\\vdots\\0_p\\I_p}
\end{align*}
where $0_p $ and $I_p$ are the $p\times p$ zero and identity matrices, respectively. 
  For $k \in \{ 1, \ldots, q \}$ we choose 
\begin{align*}
J^Y_k = \pmat{
  \Omega_k   & I_{2p}  &  &  \\
  &  \Omega_k  & \ddots&  \\
  && \ddots& I_{2p}    \\
  && & \Omega_k 
}
, \qquad 
G_2^k = \pmat{0_{2p}\\\vdots\\0_{2p}\\I_p\\0_p}
\end{align*}
where $\Omega_k = \pmatsmall{0_p&\gw_k I_p\\-\gw_k I_p&0_p}$.
The pair $(G_1,G_2)$ is controllable by construction.

\smallskip

\noindent \textbf{PART II. The Galerkin Approximation and Stabilization}.

\smallskip

\noindent \textbf{Step 2:}
  For a fixed and sufficiently large $N\in\N$, apply the Galerkin approximation described in Section~\ref{sec:Galbackground} to the system $(A,B,C)$ to arrive at the finite-dimensional system $(A^N,B^N,C^N)$ on $V^N$.

\smallskip
\noindent \textbf{Step 3:}
Choose the parameters $\ga_1,\ga_2\geq 0$, $Q_1\in \Lin(U_0,X)$, and $Q_2\in \Lin(X,Y_0)$ with $U_0,Y_0$ Hilbert in such a way that the systems
$(A + \ga_1 I,Q_1,C)$  and $(A+\ga_2 I,B,Q_2)$
are both exponentially stabilizable and detectable.
Let $Q_1^N$ and $Q_2^N$ be the approximations of $Q_1$ and $Q_2$, respectively, according to the approximation $V^N$ of $V$.
Let $Q_0\in \Lin(Z_0,\C^{p_0})$ be such that $(Q_0,G_1)$ is observable, and let $R_1\in \Lin(Y)$ and $R_2\in \Lin(U)$ be positive definite matrices.
Denote
\eq{
  \Acomp^N = \pmat{G_1&G_2C^N\\0&A^N}, \quad \Bcomp^N=\pmat{G_2D\\B^N}, \quad \Qcomp^N = \pmat{Q_0&0\\0&Q_2^N}.
}
Define
$L^N =-\Sigma_N C^N R_1\inv\in \Lin(Y, V^N) $ 
and define 
$K^N = \pmat{K_1^N,\; K_2^N} =-R_2\inv (\Bcomp^N)^\ast\Pi_N \in \Lin(Z_0\times V^N ,U )$ 
where
$\Sigma_N$
and
$\Pi_N$
are the non-negative solutions of the finite-dimensional Riccati equations
\eq{
  (A^N + \ga_1 I) \Sigma_N + \Sigma_N (A^N + \ga_1 I)^* 
  - \Sigma_N \left(C^N \right)^* R_1\inv C^N \Sigma_N &=- Q_1^N (Q_1^N)^*  \\
  (\Acomp^N + \ga_2 I)^* \Pi_N + \Pi_N (\Acomp^N +\ga_2 I) 
  - \Pi_N \Bcomp^N R_2\inv\left(\Bcomp^N\right)^\ast \Pi_N &=- 
  \left(\Qcomp^N\right)^\ast \Qcomp^N.  
}
The exponential stabilizability of the pair $(\Acomp^N+\ga_2 I,\Bcomp^N)$ for large $N$ follows from~\citel{Mor94}{Sec. 5.2} and Lemma~\ref{lem:BlockOpStabDet}.
With the above choices the matrices $\Acomp^N+\Bcomp^NK^N$ and $A^N+L^NC^N$ are Hurwitz if $N$ is sufficiently large~\citel{BanIto97}{Thm. 4.8}.

\smallskip

\noindent\textbf{PART III.} \textbf{The Model Reduction} 

\smallskip

\noindent \textbf{Step 4:}
  For a fixed and suitably large $r\in\N$, $r\leq N$,
  apply the balanced truncation method in Section~\ref{sec:ROM} to the stable finite-dimensional system
  \eq{
    (A^N+L^NC^N, [ B^N+L^N D,\;L^N],K^N_2)
  }
to obtain a stable $r$-dimensional reduced order system
\eq{
  \left(A_L^r,[B_L^r ,\; L^r] ,K_2^r\right).
}

\begin{thm} \label{thm:OBScontr}
  Let Assumption~\textup{\ref{ass:Psurj}} be satisfied.
The finite-dimensional controller~\eqref{eq:FinConObs} solves the Robust Output Regulation Problem provided that the order $N$ of the Galerkin approximation and the order $r$ of the model reduction are sufficiently high.

If $\ga_1,\ga_2>0$, then the controller achieves a uniform stability margin in the sense that for any fixed $0<\ga<\min \set{\ga_1,\ga_2}$ the 
 operator $A_e+\ga I$ will generate an exponentially stable semigroup 
if $N$ and $r\leq N$ are sufficiently large.  
\end{thm}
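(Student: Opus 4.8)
The plan is to reduce the theorem, via the internal model principle \citel{PauPoh10}{Thm.~6.9}, to two statements: that the controller \eqref{eq:FinConObs} incorporates an internal model of the signal class \eqref{eq:yrefwdist}, and that the closed-loop semigroup $T_e(t)$ is exponentially stable (with the claimed margin in the second part). The internal model property is essentially built into the construction. The system operator of \eqref{eq:FinConObs} is block lower triangular, $\GG_1=\pmatsmall{G_1&0\\ B_L^rK_1^N& A_L^r+B_L^rK_2^r}$, and $G_1=\diag(J_0^Y,\ldots,J_q^Y)$ carries the prescribed $p$ independent Jordan chains of length $n_k$ at each $i\gw_k$ by Step~1, while the reduction in Step~4 modifies only the observer-type block $z_2$. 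The internal model of $\GG_1$ is then verified from this block-triangular form together with the Jordan structure of $G_1$, exactly as for the infinite-dimensional controller in \cite{Pau16}. (Assumption~\ref{ass:Psurj} is not used here but in Step~3 and below, via Lemma~\ref{lem:BlockOpStabDet}, to ensure exponential stabilizability of $(\Acomp+\ga_2I,\Bcomp)$.) Thus the real task is the exponential stability of $T_e(t)$.

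First I would introduce an infinite-dimensional comparison controller. Let $\Pi,\Sigma$ be the nonnegative stabilizing solutions of the infinite-dimensional algebraic Riccati equations for $(\Acomp+\ga_2I,\Bcomp,\Qcomp)$ and $(A+\ga_1I,Q_1,C)$, and set $K=\pmat{K_1,\;K_2}=-R_2\inv\Bcomp^\ast\Pi$ and $L=-\Sigma C^\ast R_1\inv$; by \citel{BanIto97}{Thm.~4.8} the Riccati solutions, and hence the gains $K^N$ and $L^N$, converge to $\Pi,\Sigma$ and $K,L$ (after composition with $P^N$), and $\Acomp^N+\ga_2I+\Bcomp^NK^N$, $A^N+\ga_1I+L^NC^N$ are uniformly Hurwitz for large $N$; in particular $\Acomp+\Bcomp K$ and $A+LC$ generate semigroups with growth bounds below $-\ga_2$ and $-\ga_1$, respectively. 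Replacing the reduced data $(A_L^r,B_L^r,L^r,K_2^r,K_1^N)$ in \eqref{eq:FinConObs} by $(A+LC,B+LD,L,K_2,K_1)$ produces the infinite-dimensional observer-based robust controller of \cite{Pau16}, which solves the robust output regulation problem. For the margin, passing to coordinates $(z_1,x,\tilde x)$ with $\tilde x=x-z_2$ turns the closed-loop operator into the block upper triangular operator $\pmatsmall{\Acomp+\Bcomp K&-\Bcomp K_2\\ 0&A+LC}$ with bounded off-diagonal coupling; a variation-of-constants estimate for the corresponding semigroup, using the growth bounds of the diagonal blocks, shows that this operator shifted by any $\ga<\min\set{\ga_1,\ga_2}$ still generates an exponentially stable semigroup. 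So the infinite-dimensional controller also achieves the uniform margin $\min\set{\ga_1,\ga_2}$.

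The main step is then to transfer closed-loop stability to the reduced controllers through the graph topology, following \cite{Mor94b,Cur03}. From $u=K_1^Nz_1+K_2^rz_2$ one gets $\dot z_2=A_L^rz_2+B_L^ru-L^re$, so the transfer function of \eqref{eq:FinConObs} factors as
\[
C^{N,r}(s)=\bigl(I-K_2^rR(s,A_L^r)B_L^r\bigr)\inv\bigl(K_1^NR(s,G_1)G_2-K_2^rR(s,A_L^r)L^r\bigr),
\]
and similarly for the order-$N$ controller $C^N$ (with $(A_L^N,B_L^N,L^N,K_2^N)=(A^N+L^NC^N,B^N+L^ND,L^N,K_2^N)$) and for the infinite-dimensional $C^\infty$. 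Thus $C^{N,r}$ is built from the fixed stable factor $R(\cdot,G_1)G_2$, the convergent gain $K_1^N\to K_1$, and the transfer function $K_2^rR(\cdot,A_L^r)[B_L^r,L^r]$ of the stable reduced system of Step~4. By Lemma~\ref{lem:ROMconv} this last function converges, for each fixed $N$, to $K_2^NR(\cdot,A_L^N)[B_L^N,L^N]$ in the graph topology (equivalently in $M(\Hinf)$, both systems being stable), with error vanishing as $r\to N$; by Lemma~\ref{lem:GTconv}(b) together with $L^N\to L$, $K^N\to K$, the order-$N$ transfer functions converge in the graph topology to $K_2R(\cdot,A+LC)[B+LD,L]$. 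Since the graph topology is preserved under the algebraic operations (sum, product, feedback inversion) that assemble $C^{N,r}$, performed near the well-posed limit $C^\infty$, one can choose first $N$ and then $r\le N$ so large that $C^{N,r}$ lies arbitrarily close to $C^\infty$ in a metric inducing the graph topology. Because $C^\infty$ stabilizes $(A,B,C,D)$ and the set of controllers stabilizing a fixed plant is open in the graph topology, $C^{N,r}$ also stabilizes $(A,B,C,D)$ for $N$ and $r\le N$ sufficiently large; exponential stabilizability of $(A,B)$, exponential detectability of $(C,A)$ and controllability of $(G_1,G_2)$ then upgrade input--output stability of the closed loop to exponential stability of $T_e(t)$, as in \cite{Mor94b,Cur03}. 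Together with the internal model property, \citel{PauPoh10}{Thm.~6.9} then yields that \eqref{eq:FinConObs} solves the robust output regulation problem, the estimates in parts (b)--(c) being automatic consequences of closed-loop exponential stability and the internal model.

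For the uniform-margin statement I would repeat the previous paragraph with $M(\Hinf(\C_{-\ga}^+))$ in place of $M(\Hinf)$ for a fixed $0<\ga<\min\set{\ga_1,\ga_2}$: the convergence in Lemma~\ref{lem:GTconv}(b) holds in $M(\Hinf(\C_{-\eps}^+))$ for small $\eps>0$ (as its proof shows), and since $A^N+L^NC^N$ has stability margin larger than $\ga$ for large $N$ by the observer Riccati construction, the estimate of Lemma~\ref{lem:ROMconv} applied to the $\ga$-shifted reduced system holds in $M(\Hinf(\C_{-\ga}^+))$ with a modified constant; the infinite-dimensional closed loop has margin larger than $\ga$ by the second paragraph. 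Robustness of stabilization in $M(\Hinf(\C_{-\ga}^+))$ then gives that $A_e+\ga I$ generates an exponentially stable semigroup once $N$ and $r\le N$ are large. I expect the third paragraph to be the main obstacle: fusing the two approximation levels (Galerkin and balanced truncation) with the convergence of the Riccati solutions into a single graph-topology limit for the structured controller, and converting the resulting input--output stability back into exponential stability of $T_e(t)$, is where the work concentrates; the internal-model bookkeeping and the margin estimate are comparatively routine once the framework of \cite{Mor94,Mor94b,Cur03} is in place.
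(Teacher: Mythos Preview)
Your overall strategy matches the paper's: verify the internal model, build an infinite-dimensional comparison controller from the limiting Riccati gains $(K,L)$, show its closed-loop is exponentially stable via the block upper-triangular similarity $\Lambda_e A_e^\infty\Lambda_e^{-1}$, and then pass to the reduced controller by a graph-topology argument combining Lemma~\ref{lem:GTconv}(b) and Lemma~\ref{lem:ROMconv}. The difference is in how the graph-topology step is organized: you work with the controller transfer function $C^{N,r}$, while the paper works with an augmented closed-loop system $(A_{e0},\tilde B_e,\tilde C_e)$ having extra inputs and outputs tailored so that stabilizability and detectability are visible by inspection, and then peels off two layers of output feedback via Lemma~\ref{lem:CLvsOLconv}.

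There is one genuine slip in your argument: you call $R(\cdot,G_1)G_2$ a ``fixed stable factor''. It is not --- by construction $\sigma(G_1)=\{i\omega_k\}_k\subset i\R$, so $R(\cdot,G_1)G_2\notin M(\Hinf)$. This means your factorization of $C^{N,r}$ is a sum/product of objects one of which is genuinely unstable, and the claim that ``the graph topology is preserved under the algebraic operations'' needs to be justified for such combinations, not just asserted. The paper sidesteps exactly this issue: it chooses $\tilde{\mathcal G}_2=\pmatsmall{G_2&0\\-L^r&B_L^r}$ and $\tilde K=\pmatsmall{K_1^N&0\\0&K_2^r}$ so that both $\mathcal G_1$ and $\mathcal G_1^\infty$ become the image under the \emph{same} output feedback $K_c$ of block-diagonal systems $\mathcal G_{10}=\diag(G_1,A_L^r)$ and $\mathcal G_{10}^\infty=\diag(G_1,A+L^\infty C)$. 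Then Lemma~\ref{lem:CLvsOLconv} reduces the question to graph-topology closeness of $(A_L^r,[B_L^r,L^r],K_2^r)$ and $(A+L^\infty C,[B+L^\infty D,L^\infty],K_2^\infty)$, where both systems are stable and the unstable $G_1$-block is identical on both sides and drops out. Your controller-level route can be made to work, but you must either imitate this feedback decomposition or produce explicit coprime factorizations of $C^{N,r}$ and $C^\infty$ and compare those in $M(\Hinf)$; the present write-up does neither.

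A second, smaller gap is the upgrade from input--output to exponential stability. ``Stabilizability of $(A,B)$, detectability of $(C,A)$ and controllability of $(G_1,G_2)$'' is not quite the right list; you need stabilizability and detectability of the full closed-loop system with respect to some inputs and outputs. The paper makes this explicit by showing that $(A_{e0}+\alpha I,\tilde B_e,\tilde C_e)$ is stabilizable and detectable --- this uses, in addition to controllability of $(G_1,G_2)$, the observability of $(K_1^N,G_1)$, which is established in Lemma~\ref{lem:BlockOpStabDet}(c) as a consequence of $K^N$ stabilizing $(\Acomp^N,\Bcomp^N)$. Since your controller is finite-dimensional this is ultimately recoverable, but you should either state which inputs/outputs you are using for the Rebarber-type argument or, more simply, note that the finite-dimensional controller is trivially stabilizable and detectable and invoke the standard joint stabilizability/detectability result.
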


\subsection{Dual Observer-Based Finite-Dimensional Controller}
\label{sec:DOBScontr}

The second controller we construct is of the form
\begin{subequations}
  \label{eq:FinConDObs}
  \eqn{
    \dot{z}_1(t)&= G_1z_1(t) + G_2^N C_K^r z_2(t) + G_2^N e(t)\\
    \dot{z}_2(t)&= (A_K^r+L^rC_K^r)z_2(t)  +L^r e(t)\\
    u(t)&= K_1z_1(t) - K_2^rz_2(t) 
  }
\end{subequations}
  with state $(z_1(t),z_2(t))\in Z:= Z_0\times \C^r$,
  and the matrices $(G_1,G_2^N,A_K^r,C_K^r,$ $K_1,K_2^r,L^r)$ are chosen using the algorithm below.
More precisely,
$(G_1,K_1)$ are as in Step~1, $G_2^N$ is as in Step~3, and $(A_K^r,C_K^r,K_2^r,L^r)$ are as in Step~4.
  The terminology ``dual observer-based controller'' is motivated by the property that the dual system of~\eqref{eq:FinConDObs} will in fact achieve closed-loop stability with the dual $(A^\ast,C^\ast,B^\ast,D^\ast)$ of the original system~\eqref{eq:plantintro}. Since $X_e$ is a Hilbert space, we can use this property in proving closed-loop stability in Section~\ref{sec:Proofs}.

\smallskip

\noindent\textbf{PART I. The Internal Model}

\smallskip

\noindent \textbf{Step 1:}
  We choose $Z_0=Y^{n_0}\times Y^{2n_1} \times \ldots \times Y^{2n_q}$, $G_1 = \diag(J_0^Y, \ldots, J^Y_q)\in \Lin(Z_0)$, and 
  $K_1=[K_1^0,\, \ldots, \, K_1^q] \in \Lin(Z_0,U)$. 
  The parts of $G_1$ and $K_1$ are chosen as follows.
  For $k=0$, let
\begin{align*}
J^Y_0 = \pmat{
0_p  & I_p  	&  			&  \\
      	&  0_p & \ddots	&  \\
      	&			& \ddots	& I_p    \\
      	&			& 			& 0_p
} 
\end{align*}
and $K_1^0=[K_1^{01},\;0_p,\;\ldots,\; 0_p]$,
where $0_p $ and $I_p$ are the $p\times p$ zero and identity matrices, respectively. 
  For $k \in \{ 1, \ldots, q \}$ we choose 
\begin{align*}
J^Y_k = \pmat{
  \Omega_k   & I_{2p}  &  &  \\
  &  \Omega_k  & \ddots&  \\
  && \ddots& I_{2p}    \\
  && & \Omega_k 
}
, \qquad \Omega_k = \pmat{0_p&\gw_k I_p\\-\gw_k I_p&0_p}
\end{align*}
and $K_1^k = [K_1^{k1},0_p,0_{2p},\ldots,0_{2p}]$.
For each $k\in \List[0]{q}$ the matrices $K_1^{k1}\in \Lin(Y,U)$ are chosen\footnote{This choice is possible by Assumption~\ref{ass:Psurj} whenever $i\gw_k\in\rho(A)$.
  If $i\gw_k\notin \rho(A)$ for some $k$, then we instead choose $K_1^{1k}$ in such a way that
  $P_L(iw_k) K_1^{k 1} \in \Lin(Y)$ is boundedly invertible where $P_L(\gl)=CR(\gl,A+LC)(B+LD)+D$ with some $L\in \Lin(Y,X)$ such that $A+LC$ is exponentially stable. The invertibility of $P_L(iw_k) K_1^{k 1} \in \Lin(Y)$  does not depend on the choice of $L$ due to the identity $P_{\tilde{L}}(i\gw_k)=(I-CR(i\gw_k,A+LC)(\tilde{L}-L))\inv P_L(i\gw_k)$ where $\tilde{L}\in \Lin(Y,X)$ is another operator for which $A+\tilde{L}C$ is exponentially stable.}
so that $P(i\gw_k)K_1^{k1}\in \Lin(Y)$ are boundedly invertible for all $k\in \List[0]{q}$.
If $m=p$, we can choose $K_1^{k1}=I_p$ for all $k\in \List[0]{q}$.
    The pair $(K_1,G_1)$ is observable by construction.

\smallskip

\noindent \textbf{PART II. The Galerkin Approximation and Stabilization}.

\smallskip

\noindent \textbf{Step 2:}
  For a fixed and sufficiently large $N\in\N$, apply the Galerkin approximation described in Section~\ref{sec:Galbackground} to the system $(A,B,C)$ to arrive at the finite-dimensional system $(A^N,B^N,C^N)$ on $V^N$.

\smallskip

\noindent \textbf{Step 3:}
Choose the parameters $\ga_1,\ga_2\geq 0$, $Q_1\in \Lin(X,Y_0)$, and $Q_2\in \Lin(U_0,X)$ with $U_0,Y_0$ Hilbert in such a way that the systems
$(A + \ga_1 I,B,Q_1)$  and $(A+\ga_2 I,Q_2,C)$
are both exponentially stabilizable and detectable.
Let $Q_1^N$ and $Q_2^N$ be the approximations of $Q_1$ and $Q_2$, respectively, according to the approximation $V^N$ of $V$.
Let $Q_0\in \Lin(\C^{p_0},Z_0)$ be such that $(G_1,Q_0)$ is controllable, and $R_1\in \Lin(U)$ and $R_2\in \Lin(Y)$ be positive definite matrices.
Denote
$ \Ccomp^N=\pmat{DK_1,\; C^N}$ and
\eq{
  \Acomp^N = \pmat{G_1&0\\B^NK_1&A^N},
  \quad \Qcomp^N = \pmat{Q_0&0\\0&Q_2^N},
}
Define
$\mc{G}_2^N:= \pmatsmall{G_2^N\\L^N} = -\Pi_N C_s^N R_2\inv\in \Lin(Y,Z_0\times V^N)$
and define
$K_2^N = -R_1\inv (B^N)^\ast \Sigma_N\in \Lin(V^N,U)$ 
where
$\Sigma_N$
and
$\Pi_N$
are the non-negative solutions of the finite-dimensional Riccati equations
\eq{
  (A^N + \ga_1 I)^\ast \Sigma_N + \Sigma_N (A^N + \ga_1 I)  
   - \Sigma_N B^N R_1\inv \left(B^N\right)^\ast \Sigma_N &=- (Q_1^N)^* Q_1^N  \\
  (\Acomp^N + \ga_2 I) \Pi_N + \Pi_N (\Acomp^N +\ga_2 I)^\ast 
   - \Pi_N \left(\Ccomp^N\right)^\ast R_2\inv\Ccomp^N \Pi_N &=- 
  \Qcomp^N \left(\Qcomp^N\right)^\ast .  
}
The exponential detectability of the pair $(\Ccomp^N,\Acomp^N+\ga_2I)$ for large $N$ follows from~\citel{Mor94}{Sec. 5.2} and Lemma~\ref{lem:BlockOpStabDet}.
With these choices the matrices $A^N+B^NK_2^N$ and $\Acomp^N+\mc{G}_2^N\Ccomp^N$ are Hurwitz if $N$ is sufficiently large~\citel{BanIto97}{Thm. 4.8}.

\smallskip

\noindent\textbf{PART III.} \textbf{The Model Reduction} 

\smallskip

\noindent \textbf{Step 4:}
  For a fixed and suitably large $r\in\N$, $r\leq N$
  apply the balanced truncation method in Section~\ref{sec:ROM} to the stable finite-dimensional system
\begin{align*}
  \biggl(A^N+B^NK^N_2, L^N,
\begin{bmatrix}
C^N+DK_2^N \\ K^N_2
\end{bmatrix}
\biggr)
\end{align*}
to obtain a stable $r$-dimensional reduced order system
\eq{
  \biggl(
  A_K^r,L^r,
  \begin{bmatrix} C_K^r\\ K_2^r \end{bmatrix}
  \biggr)
.
}

\begin{thm} \label{thm:DOBScontr}
  Let Assumption~\textup{\ref{ass:Psurj}} be satisfied.
The finite-dimensional controller~\eqref{eq:FinConDObs} solves the Robust Output Regulation Problem provided that the order $N$ of the Galerkin approximation and the order $r$ of the model reduction are sufficiently high.

If $\ga_1,\ga_2>0$, then the controller achieves a uniform stability margin in the sense that for any fixed $0<\ga<\min \set{\ga_1,\ga_2}$ the 
 operator $A_e+\ga I$ will generate an exponentially stable semigroup 
if $N$ and $r\leq N$ are sufficiently large.  
\end{thm}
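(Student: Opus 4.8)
The plan is to verify the two conditions that the internal model principle~\cite[Thm.~6.9]{PauPoh10} attaches to the Robust Output Regulation Problem: that the controller~\eqref{eq:FinConDObs} incorporates a $p$-copy internal model of the signals~\eqref{eq:yrefwdist}, and that the closed-loop semigroup $T_e(t)$ is exponentially stable. Once both hold, requirements (a)--(c) of the problem follow at once --- (a) being exponential stability itself, and (b), (c) being consequences of the internal model principle together with the closed-loop solution formula. The internal model property is the routine half. The blocks $G_1$ and $K_1$ are fixed in Step~1 and are not altered by the Galerkin step or by the balanced truncation; the pair $(K_1,G_1)$ is observable and $G_1$ has the prescribed block-diagonal Jordan form, so $i\gw_k\in\gs_p(G_1)$ with at least $p=\dim Y$ independent Jordan chains of length $\ge n_k$ for every $k\in\List[0]{q}$. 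The controller system operator $\GG_1$ is block upper triangular with diagonal blocks $G_1$ and $A_K^r+L^rC_K^r$, and $K=[K_1,\,-K_2^r]$; hence, once we know that $A_K^r+L^rC_K^r$ is Hurwitz for $N$ and $r$ sufficiently large (which, like the closed-loop stability below, follows by approximation of the corresponding exponentially stable infinite-dimensional operator from~\cite{Pau16}), the imaginary-axis spectral structure of $\GG_1$ is inherited from $G_1$ and the internal model property of $(\GG_1,K)$ follows exactly as in~\cite[Def.~4]{Pau16} and~\cite{HamPoh10}, the choices of $K_1^{k1}$ from Step~1 (possible by Assumption~\ref{ass:Psurj}) entering when $\dim Y<\dim U$.

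The substance of the proof is the closed-loop exponential stability, and here I would exploit the duality pointed out before the theorem statement. Because $X_e$ is a Hilbert space, $T_e(t)$ is exponentially stable if and only if $T_e(t)^\ast$ is, and $A_e^\ast$ is the system operator of the closed loop formed by the dual plant $(A^\ast,C^\ast,B^\ast,D^\ast)$ --- which is exponentially stabilizable and detectable since $(A,B)$ is exponentially stabilizable and $(C,A)$ is exponentially detectable --- together with the controller obtained by dualising~\eqref{eq:FinConDObs}. A direct computation shows that this dual controller has precisely the observer-based structure~\eqref{eq:FinConObs}, now with fixed internal model blocks $G_1^\ast$ and $K_1^\ast$; moreover the construction is compatible with adjoints, since the Galerkin scheme of Section~\ref{sec:Galbackground} satisfies $(A^\ast)^N=(A^N)^\ast$, $(C^\ast)^N=(C^N)^\ast$ and $(B^\ast)^N=(B^N)^\ast$ by its defining relations, the LQR Riccati equation for the dual plant is the filter Riccati equation for the original plant and conversely, and the balanced truncation of $(A^\ast,C^\ast,B^\ast)$ is the adjoint of that of $(A,B,C)$ (equal Hankel singular values, controllability and observability Gramians interchanged). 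Thus the stabilizing parameters of the dual controller are exactly the adjoints of those produced in Steps~2--4 of Section~\ref{sec:DOBScontr}, and the stabilizability and detectability hypotheses needed for the analysis are the adjoints of the ones already secured there. Consequently the closed-loop stability part of the proof of Theorem~\ref{thm:OBScontr} applies to this configuration essentially verbatim --- it rests only on the stabilizability and detectability of the plant and on the convergence results of Section~\ref{sec:RORP}, not on the precise shape of the internal model --- and yields the exponential stability of the dual closed loop, and, when $\ga_1,\ga_2>0$, the exponential stability of $A_e^\ast+\ga I$ for every fixed $0<\ga<\min\set{\ga_1,\ga_2}$, provided $N$ and $r\le N$ are sufficiently large. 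Taking adjoints once more gives the exponential stability of $T_e(t)$, respectively of the semigroup generated by $A_e+\ga I$; in particular $A_K^r+L^rC_K^r$ is Hurwitz for such $N,r$, which completes the verification of the internal model property. Combining the two conditions, the internal model principle shows that~\eqref{eq:FinConDObs} solves the Robust Output Regulation Problem.

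The genuine difficulty is located in the stability argument imported from the proof of Theorem~\ref{thm:OBScontr}: one must show that the closed loop with the reduced-order controller approximates, in the graph topology of $M(\Hinf)$ --- in fact of $M(\Hinf(\C_{-\eps}^+))$ for some $\eps>0$, which is where the analytic semigroup is used --- a closed loop with an infinite-dimensional internal-model-based controller already known to be exponentially stabilising. This requires combining the Galerkin convergence of Lemma~\ref{lem:GTconv}, the convergence of the finite-dimensional Riccati solutions, and hence of the gains $K_2^N$, $L^N$, $G_2^N$, to their infinite-dimensional counterparts (the machinery of~\cite{BanIto97,Mor94}), and the balanced-truncation estimate of Lemma~\ref{lem:ROMconv}, together with a diagonal argument in the two approximation orders $N$ and $r$, while keeping track of the fixed unstable factor arising from $G_1$ so that the graph-topology distance between the controllers stays controlled. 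The duality bookkeeping required in the present proof is routine but must be carried out with care, so that the dual of~\eqref{eq:FinConDObs} is correctly identified and all approximation quantities are transferred to the dual plant.
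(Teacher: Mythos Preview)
Your proposal is correct and follows essentially the same approach as the paper: the internal model property is verified directly from the block upper-triangular structure of $\GG_1$ with $G_1$ on the diagonal, and closed-loop stability is obtained by dualising so that $A_e^\ast$ becomes the closed-loop operator of the dual plant $(A^\ast,C^\ast,B^\ast,D^\ast)$ with an observer-based controller, after which the proof of Theorem~\ref{thm:OBScontr} is invoked together with the observations that the Galerkin scheme and balanced truncation commute with adjoints. The paper makes slightly more explicit than you do that the dual controller does not literally coincide with the one in Section~\ref{sec:OBScontr} (the roles of $G_2$ and $K_1$ are interchanged and $G_1^\ast$ is block lower- rather than upper-triangular), and that this is harmless because the stability argument uses only the stabilizability/detectability of the block pairs supplied by Lemma~\ref{lem:BlockOpStabDet}; your remark that the proof of Theorem~\ref{thm:OBScontr} ``rests \ldots\ not on the precise shape of the internal model'' captures exactly this point.
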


\section{Proofs of the Main Results}
\label{sec:Proofs}

The proofs of Theorems~\ref{thm:OBScontr} and \ref{thm:DOBScontr} are based on the internal model principle which states that a controller solves the robust output regulation problem provided that it contains an internal model of the frequencies of $\yref(t)$ and $\wdist(t)$ and the closed-loop system is exponentially stable.

  In showing the closed-loop stability we employ a combination of perturbation and approximation arguments. We first construct an infinite-dimensional controller $(\GG_1^\infty,\GG_2^\infty,K^\infty)$ which stabilizes the closed-loop system
 and then compare the distance between two closed-loop systems --- one with our controller $(\GG_1,\GG_2,K)$ and one with $(\GG_1^\infty,\GG_2^\infty,K^\infty)$ --- in the graph topology for large $N$ and $r$.
To ensure the stabilizability and detectability of the closed-loop systems, we consider them with suitable modified input and output operators $\tilde{B}_e$ and $\tilde{C}_e$.
We then prove that $(A_e,\tilde{B}_e,\tilde{C}_e)$ is input-output stable
by showing that for sufficiently large $N$ and $r$
the distance of this system in the graph topology 
to the input-output stable closed-loop system $(A_e^\infty,\tilde{B}_e^\infty,\tilde{C}_e^\infty)$
can be made arbitrarily small.
The input-output stability together with stabilizability and detectability of $(A_e,\tilde{B}_e,\tilde{C}_e)$ will finally imply that $T_e(t)$ is exponentially stable.

In summary, the proof consists of the following parts:
\begin{itemize}
  \item[1.] Verify that $(\GG_1,\GG_2,K)$   has an internal model.
  \item[2.] Define an exponentially stabilizable and detectable closed-loop system $(A_e,\tilde{B}_e,\tilde{C}_e)$ with suitable 
    $\tilde{B}_e$ and $\tilde{C}_e$. 
    The input-output stability of this system will imply the exponential stability of $T_e(t)$
  by~\citel{Reb93}{Cor. 1.8}.
  \item[3.] Construct a stabilizing infinite-dimensional controller $(\GG_1^\infty,\GG_2^\infty,K^\infty)$ and the corresponding input-output stable closed-loop system $(A_e^\infty,$ $\tilde{B}_e^\infty,\tilde{C}_e^\infty)$.
  \item[4.] Show that for large $N$ and $r$ the distance in graph topology between $(A_e,\tilde{B}_e,\tilde{C}_e)$ and $(A_e^\infty,\tilde{B}_e^\infty,\tilde{C}_e^\infty)$ becomes arbitrarily small, and thus
$(A_e,\tilde{B}_e,\tilde{C}_e)$ is input-output stable 
    for sufficiently large $N$ and $r$~\cite{JacNet88,Mor94}.
  \item[5.] Combine parts~1, 2, and~4 to conclude that  $(\mc{G}_1,\mc{G}_2,K)$ solves the robust output regulation problem.
\end{itemize}

\begin{proof}[Proof of Theorem~\textup{\ref{thm:OBScontr}}]
  The matrices $(\mc{G}_1,\mc{G}_2,K)$ of the error feedback controller~\eqref{eq:FeedCon} are given by
  \eq{
    \GG_1 = 
    \pmat{
      G_1 & 0 \\ B_L^r K_1^N & A_L^r+ B_L^r K_2^r
    }
    , \qquad
    \GG_2  =
    \pmat{
      G_2  \\ -L^r  
    }
    , 
  }
    $K  = \pmat{K_1^N,\, K_2^r }$,
and $Z=Z_0\times \C^r$ or $Z=Z_0\times \R^r$.
  If $\ga_1>0$ and $\ga_2>0$ we let $0<\ga <\min \set{\ga_1,\ga_2}$ be arbitrary. Otherwise we take $\ga=0$.

\noindent \textbf{Part 1 -- The Internal Model Property:}
The block structures of $\GG_1$ and $\GG_2$ are the same as in the controller constructed in~\citel{Pau16}{Sec. VI}. 
The matrices $G_1$ and $G_2$ are related to the corresponding matrices in~\citel{Pau16}{Sec. VI} through a similarity transform. Since the internal model property is invariant under such transformations,
the argument at the end of the proof of~\citel{Pau16}{Thm. 15} shows that if the closed-loop is exponentially stable, then the controller $(\GG_1,\GG_2,K)$ has an internal model in the sense that (see~\citel{Pau16}{Def. 5})
    \eq{
      \ran(i\gw_k-\GG_1)\cap \ran(\GG_2)&=\set{0}, \qquad \qquad 0\leq k\leq q\\
      \ker(\GG_2)&=\set{0}\\
      \ker(i\gw_k-\GG_1)^{n_k-1}&\subset \ran(i\gw_k-\GG_1) \quad 0\leq k\leq q.
    }

\medskip

\noindent\textbf{Part 2 -- A Modified Closed-Loop System:} 
Consider a composite system 
$(A_{e0},\tilde{B}_e,\tilde{C}_e)$
with
\eq{
  A_{e0}&= \pmat{A&0\\0&\GG_1}, \quad\quad\; \tilde{B}_e=
  \pmat{B&0\\0&\tilde{\GG}_2},
  \qquad \tilde{C}_e= \pmat{C&0\\0&\tilde{K}} ,\\
  \quad \tilde{\GG}_2&=\pmat{G_2&0\\-L^r&B_L^r},  
  \quad \tilde{K}=\pmat{K_1^N&0\\0&K_2^r}.
}
If $N$ is large, then $A^N+\ga I+L^NC^N$ is exponentially stable by~\citel{BanIto97}{Thm. 4.8}. 
Since $A_L^r$ is obtained from $A^N+L^N C^N$ using balanced truncation,
also $A_L^r+\ga I$ in $\mc{G}_1$ is Hurwitz for large $N$ and $r$.
The pair $(G_1+\ga I,G_2)$ is controllable 
by construction, and $(K_1^N,G_1+\ga I)$ is observable by Lemma~\ref{lem:BlockOpStabDet}.
Using these properties it is easy to see that $(\GG_1 + \ga I,\tilde{\GG}_2,\tilde{K})$  is exponentially stabilizable and detectable for large $N$ and $r$, and therefore the same holds for $(A_{e0} + \ga I,\tilde{B}_e,\tilde{C}_e)$. 
A direct computation shows that $A_e=A_{e0} +\tilde{B}_eK_e \tilde{C}_e$ where
\eqn{
  \label{eq:Kefeedback}
  K_e=\pmat{0&I&I\\I&D&D\\0&0&0} 
}
and thus under the output feedback with the operator $K_e$ the system $(A_{e0}+\ga I,\tilde{B}_e,\tilde{C}_e)$ becomes $(A_e+\ga I,\tilde{B}_e,\tilde{C}_e)$.
Since output feedback preserves stabilizability and detectability, for large $N$ and $r\leq N$ the input-output stability of $(A_e+\ga I,\tilde{B}_e,\tilde{C}_e)$ will imply the exponential stability of the semigroup $e^{\ga t}T_e(t)$ generated by $A_e+\ga I$~\citel{Reb93}{Cor. 1.8}.

\medskip

\noindent \textbf{Part 3 -- An Infinite-Dimensional Stabilizing Controller $(\GG_1^\infty,\GG_2^\infty,$ $K^\infty)$:}
Choose $Z_\infty = Z_0\times X$ and 
\eq{
  \GG_1^\infty = 
  \pmat{
    G_1 & 0 \\ (B+L^\infty D)K_1^\infty  & A+L^\infty C+ (B+L^\infty D)K_2^\infty
  }
  , 
}
  and $\GG_2^\infty  = \pmatsmall{ G_2  \\ -L^\infty  }$
where $K^\infty:=[K_1^\infty,K_2^\infty]$ and $L^\infty$ are the limits of $K^N$ and $L^N$ in the sense that
\eq{
  \norm{K^N \bmatsmall{I&0\\0&P^N} -K^\infty}_{\Lin(Z_0\times X,U)}\to 0 
  \quad \mbox{and} \quad
  \norm{P^NL^N-L^\infty}_{\Lin(Y,X)}\to 0
}
as $N\to \infty$.
Here $P^N:X\to V^N$ is again the Galerkin projection onto $V^N$. The limit $L^\infty$ exists due to the approximation theory for solutions of Riccati operator equations~\citel{BanIto97}{Thm. 4.8}.
Moreover, if we define 
\eq{
  \Acomp = \pmat{G_1&G_2C\\0&A}, \qquad \Bcomp=\pmat{G_2D\\B}, \qquad \Qcomp = \pmat{Q_0&0\\0&Q_2}
}
then it is straightforward to show based on properties of $A$ that the form defined by $a_s(\phi,\psi)=\iprod{-\Acomp\phi}{\psi}$, $\phi\in \Dom(\Acomp)$, $\psi\in Z_0\times X$ and the approximating subspaces $V_s^N=Z_0\times V^N$ satisfy the assumptions
of~\citel{BanIto97}{Thm. 4.8}. 
Since $(\Acomp+\ga_2 I,\Bcomp, \Qcomp)$ is exponentially stabilizable and detectable by Lemma~\ref{lem:BlockOpStabDet}, also the existence of $K^\infty$ follows from~\citel{BanIto97}{Thm. 4.8}.
Moreover, the semigroups generated by $A+\ga I+L^\infty C$ and $\Acomp+\ga I+\Bcomp K^\infty$ are exponentially stable.

We will now show that $A_e^\infty $ ---  the closed-loop system operator with $(\mc{G}_1,\mc{G}_2,K)$ replaced by $(\GG_1^\infty,\GG_2^\infty,K^\infty)$ --- is such that $A_e^\infty +\ga I$ generates an exponentially stable semigroup.
If we define a bounded similarity transform
\eq{
  \Lambda_e = \pmat{0&I&0\\I&0&0\\-I&0&I}, \qquad
  \Lambda_e\inv = \pmat{0&I&0\\I&0&0\\0&I&I},
}
then a direct computation shows that
\eq{
\MoveEqLeft[0]\Lambda_eA_e^\infty \Lambda_e\inv=\Lambda_e\pmat{A&BK^\infty\\\mc{G}_2^\infty C&\mc{G}_1^\infty+\GG_2^\infty DK^\infty}\Lambda_e\inv\\
&= \Lambda_e\hspace{-.4ex}\pmat{A&BK_1^\infty&BK_2^\infty\\G_2C&G_1+G_2DK_1^\infty&G_2DK_2^\infty\\-L^\infty C&BK_1^\infty&A+BK_2^\infty+L^\infty C}\hspace{-.4ex}\Lambda_e\inv\\
&=\pmat{G_1+G_2DK_1^\infty&G_2(C+DK_2^\infty)&G_2DK_2^\infty\\BK_1^\infty&A+BK_2^\infty&BK_2^\infty\\0&0&A+L^\infty C}.
}
The first $2\times 2$ subsystem of $\Lambda_eA_e^\infty \Lambda_e\inv$ is given by  
\eq{
  \pmat{G_1&G_2C\\0&A} + \pmat{G_2D\\B} \pmat{K_1^\infty,\; K_2^\infty} 
  = \Acomp+\Bcomp K^\infty.
  }
  Since $A+\ga I+L^\infty C$ and $\Acomp + \ga I+\Bcomp K^\infty$ generate exponentially stable semigroups, the same is true for  $\Lambda_e(A_e^\infty + \ga I) \Lambda_e\inv$ and $A_e^\infty + \ga I$.

Finally, define $(A_{e0}^\infty,\tilde{B}_e^\infty,\tilde{C}_e^\infty)$ 
\eq{
  A_{e0}^\infty= \pmat{A&0\\0&\GG_1^\infty}, \qquad \tilde{B}_e=
  \pmat{B&0\\0&\tilde{\GG}_2^\infty},
 \qquad \tilde{C}_e= \pmat{C&0\\0&\tilde{K}^\infty} 
}
where 
\eq{
  \tilde{\GG}_2^\infty=\pmat{G_2&0\\-L^\infty &B+L^\infty D} \quad \mbox{and} \quad \tilde{K}^\infty=\pmat{K_1^\infty&0\\0&K_2^\infty}.
}
Output feedback with the feedback operator 
in~\eqref{eq:Kefeedback} 
transforms $(A_{e0}^\infty + \ga I,\tilde{B}_e^\infty,$ $\tilde{C}_e^\infty)$ to  $(A_e^\infty + \ga I,\tilde{B}_e^\infty,\tilde{C}_e^\infty)$. The system $(A_e^\infty + \ga I,\tilde{B}_e^\infty,\tilde{C}_e^\infty)$ is input-output stable since $A_e^\infty + \ga I$ generates an exponentially stable semigroup.

\medskip

\noindent \textbf{Part 4 -- Input-Output Stability of $(A_e,\tilde{B}_e,\tilde{C}_e)$:}
Our aim is to show that for large $N$ and $r$ the distance in graph topology between 
$(A_e+\ga I,\tilde{B}_e,\tilde{C}_e)$
and $(A_e^\infty + \ga I,\tilde{B}_e^\infty,\tilde{C}_e^\infty)$
can be made arbitrarily small.
By Lemma~\ref{lem:CLvsOLconv} and Part~3
 it is sufficient to show that the distance between $(A_{e0}^\infty + \ga I,\tilde{B}_e^\infty,\tilde{C}_e^\infty)$
 and $(A_{e0}+\ga I,\tilde{B}_e,\tilde{C}_e)$ becomes small for large $N$ and $r$. Due to the structure of these systems this is true if (and only if) the distance in graph topology between  $(\GG_1 + \ga I,\tilde{\GG}_2,\tilde{K})$ and $(\GG_1^\infty + \ga I,\tilde{\GG}_2^\infty,\tilde{K}^\infty)$ becomes small. 
If we define
\eq{
  \mc{G}_{10} = \pmat{G_1&0\\0&A_L^r}, \, 
  \mc{G}_{10}^\infty = \pmat{G_1&0\\0&A+L^\infty C}, \, K_c = \pmat{0&0\\I&I}
}
we see that $\GG_1=\GG_{10}+\tilde{\GG}_2K_c \tilde{K}$ and $\GG_1^\infty=\GG_{10}^\infty+\tilde{\GG}_2^\infty K_c \tilde{K}^\infty$. Therefore Lemma~\ref{lem:CLvsOLconv} and the structure of the controllers imply that the distance between $(\GG_1 + \ga I,\tilde{\GG}_2,\tilde{K})$ and $(\GG_1^\infty + \ga I,\tilde{\GG}_2^\infty,\tilde{K}^\infty)$
can be made small provided that the distance $d(\mc{P},\mc{P}_r)$
in the graph topology between 
\eq{
  \mc{P}&:= (A + \ga I+L^\infty C,[ B+L^\infty D,\;L^\infty ],K_2^\infty) \quad \mbox{and} \\
  \mc{P}_r&:=(A_L^r + \ga I,[B_L^r,\; L^r],K_2^r)
}
becomes arbitrarily small for large $r$ and $N$.
The triangle inequality implies 
$d(\mc{P},\mc{P}_r)\leq d(\mc{P},\mc{P}_N)+d(\mc{P}_N,\mc{P}_r)$ where
$\mc{P}_N:=(A^N + \ga I+L^NC^N,[B^N+L^ND,\, L^N],K_2^N)$.
Since $\mc{P}$ and $\mc{P}_N$ are parts of systems obtained with output feedback from $(A + \ga I,[ B+L^\infty D,\;L^\infty ],\pmatsmall{C&0\\0&K_2^\infty})$ and $(A^N + \ga I,[B^N+L^ND,\; L^N],\pmatsmall{C^N&0\\0&K_2^N})$, respectively, Lemmas~\ref{lem:CLvsOLconv} and~\ref{lem:GTconv} imply 
$d(\mc{P},\mc{P}_N)\to 0$ as $N\to\infty$.  
Finally, since
$\mc{P}_r$ is the system obtained from $\mc{P}_N$ using model reduction, we have from Lemma~\ref{lem:ROMconv} that $d(\mc{P}_N,\mc{P}_r)$ can be made arbitrarily small by choosing a sufficiently large $r\leq N$ (in the extreme case only the choice $r=N$ may be possible, in which case $d(\mc{P}_N,\mc{P}_r)=0$).

\medskip

\noindent \textbf{Part 5 -- Conclusion:}
By Part~1 the controller contains an internal model and by Parts~2--4 the semigroup $e^{\ga t}T_e(t)$ generated by $A_e+\ga I$ is exponentially stable. We have from~\citel{Pau16}{Thm. 7} that the controller solves robust output regulation problem\footnote{In the reference~\cite{Pau16} the objective of the robust output regulation problem was to achieve $t\mapsto e^{\ga t}\norm{e(t)}\in \Lp[2](0,\infty;Y)$ for some $\ga>0$, but since in our case $B$, $C$, $\mc{G}_2$ and $K$ are bounded operators, the expression for $e(t)$ in the proof of~\citel{Pau16}{Thm. 7} implies that also~\eqref{eq:errintconv} is satisfied.}.
\end{proof}

\begin{proof}[Proof of Theorem~\textup{\ref{thm:DOBScontr}}]
  The matrices $(\mc{G}_1,\mc{G}_2,K)$ of the error feedback controller~\eqref{eq:FeedCon} are given by
\eq{
  \GG_1 = 
  \pmat{
    G_1 & G_2^N  C_K^r \\ 0  & A_K^r+L^r C_K^r
  }
  , \qquad
  \GG_2  =
  \pmat{
    G_2^N  \\ L^r  
  }
  ,
}
$ K  = \pmat{K_1,\, -K_2^r }$,
and $Z=Z_0\times \C^r$ or $Z=Z_0\times \R^r$.

\noindent \textbf{Part 1 -- The Internal Model Property:}
Due to the properties of $G_1$ and the block structure of $\mc{G}_1$, the controller contains an internal model of the reference and disturbance signals in the sense that
$\dim \ker(i\gw_k-\mc{G}_1)\geq \dim Y=p$
for all $k\in\List[0]{q}$
and $\mc{G}_1$ has at least $p$ independent Jordan chains of length greater than or equal to $n_k$ associated to each eigenvalue $i\gw_k$ (see~\citel{Pau16}{Def. 4}).

\noindent \textbf{Part 2 -- Stability of the Closed-Loop System:}
  If $\ga_1>0$ and $\ga_2>0$ we let $0<\ga <\min \set{\ga_1,\ga_2}$ be arbitrary. Otherwise we take $\ga=0$.
We will prove exponential closed-loop stability by showing that the adjoint $A_e^\ast + \ga I$ of $A_e +\ga I$ generates an exponentially stable semigroup. 
The adjoint operator $A_e^\ast$ is given by
\eq{
  A_e^\ast = \pmat{A^\ast &C^\ast \mc{G}_2^\ast \\K^\ast B^\ast & \mc{G}_1^\ast + K^\ast D^\ast \mc{G}_2^\ast}
}
where
  $\GG_2^\ast  =
  \pmat{ (G_2^N)^\ast, \, (L^r)^\ast }$,  
  $K^\ast  = \pmatsmall{K_1^\ast\\ -(K_2^r)^\ast }$,
\eq{
  \GG_1^\ast = 
  \pmat{
    G_1^\ast & 0\\  (C_K^r)^\ast (G_2^N)^\ast    & (A_K^r)^\ast+ (C_K^r)^\ast (L^r)^\ast
  }
.
}
  The dual  $(\mc{G}_1^\ast,K^\ast, \mc{G}_2^\ast)$ of $(\mc{G}_1,\mc{G}_2,K)$ coincides with a controller constructed in Section~\ref{sec:OBScontr} for the dual system $(A^\ast,C^\ast,B^\ast,D^\ast)$ in all but two respects: $G_1^\ast$ has a block lower-triangular structure (instead of block upper-triangular structure), and the choice of $K_1^\ast$ is slightly different from the choice of $G_2$ in Section~\ref{sec:OBScontr}.
  However, as seen in the proof of Theorem~\ref{thm:OBScontr}, the properties of $(G_1,G_2)$ only affect the closed-loop stability by guaranteeing the exponential stabilizability of the block-operator pair ``$(A_s^N+\ga_2 I,B_s^N)$'' in Step~3 of the construction algorithm in Section~\ref{sec:OBScontr}. Because of duality, this property corresponds exactly
to the exponential detectability of the block operator pair ``$(C_s^N,A_s^N+\ga_2 I)$'' for the controller in the current theorem, and therefore the required stabilizability property is guaranteed by Lemma~\ref{lem:BlockOpStabDet}.
  Moreover, the definitions of the Galerkin approximation in Section~\ref{sec:Galbackground} imply that the approximation $( (A^\ast)^N,(C^\ast)^N,(B^\ast)^N)$ of the dual system $(A^\ast,C^\ast,B^\ast)$ is given by $(A^\ast)^N=(A^N)^\ast$, $(B^\ast)^N=(B^N)^\ast$, and $(C^\ast)^N=(C^N)^\ast$ with the same choices of the approximating subspaces $V^N$.
In addition, it is straightforward to check that the reduced order model constructed using balanced truncation for a dual system coincides with the dual system of the reduced order model of the original system, and the reduced dual system convergences in the graph topology to the dual of the original system.
Because of this, it follows from the proof of Theorem~\ref{thm:OBScontr} that $A_e^\ast + \ga I$ generates an exponentially stable semigroup when $N$ and $r$ are sufficiently large. Since $X_e$ is a Hilbert space, also $e^{\ga t}T_e(t)$  generated by $A_e + \ga I$ is exponentially stable.
\end{proof}

\section{Robust Controller Design for Parabolic PDE Models}
\label{sec:PDEcontrol}

In this section we apply the control design algorithms in Section~\ref{sec:Controllers} for selected PDE models. In each case we use two distinct Galerkin approximations, one (of order $N$) for constructing the controller and a second one (of order $n\gtrgtr N$) for simulating the behaviour of the original system.

\subsection{A 1D Reaction--Diffusion Equation}
\label{sec:1Dheat}

Consider a one-dimensional reaction--diffusion equation on the spatial domain $\Omega = (0,\,1)$ with distributed control and observation and Neumann boundary disturbance,
\begin{subequations}
\label{eq:HeatCon}
\begin{align}
\frac{\partial x}{\partial t} (\xi, t) &=  \frac{\partial}{\partial \xi}\left(\ga(\xi)  \frac{\partial x}{\partial \xi}(\xi,t) \right) + \gg(\xi) x(\xi,t)  + 
b(\xi)
u(t), \\
\frac{\partial x}{\partial \xi}(0, t) &= \wdist (t), \;    \frac{\partial x}{\partial \xi}(1, t) = 0, \; x(\xi, 0) = x_0 (\xi),  \\
y(t) &= \int_0^1 x(\xi,t)c(\xi)d\xi.
\end{align}
\end{subequations}
We assume $\ga\in W^{1,\infty}(0,1;\R)$ with $\ga(\xi) \ge \ga_0 >0$ for all  $\xi \in (0,1)$, $\gg\in \Lp[\infty](0,1;\R)$, and $b,c\in\Lp[2](0,1;\R)$.
The disturbance signal $\wdist(t)$ acts on the  left boundary.
The system~\eqref{eq:HeatCon} is a more general version of the 1D heat equation studied in~\cite{PhaPauMTNS18}.

Choose $X=\Lp[2](0,1)$.
Due to the boundary disturbance at $\xi=0$, the system~\eqref{eq:HeatCon} has the form of a \keyterm{boundary control system}~\citel{CurZwa95}{Sec. 3.3}, 
\eq{
  \dot{x}(t)&= \mc{A} x(t) + Bu(t) \\
  y(t)&= Cx(t)\\
  \wdist(t)&= \mc{B}_d x(t) 
}
where $\mc{A}x=\frac{\partial}{\partial\xi}(\ga(\cdot)\pd{x}{\xi})+\gg(\cdot)x$ for $x\in\Dom(\mc{A})=\setm{x\in H^2(0,1)}{x'(1)=0}$,
$B = b(\cdot)\in \Lin(\R,X)$, $C=\iprod{\cdot}{c(\cdot)}\in \Lin(X,\R)$,
and $\mc{B}_dx=x'(1)$ for $x\in \Dom(\mc{A})$.
The disturbance signal $\wdist(\cdot)$ is assumed to be of the form~\eqref{eq:yrefwdistwd} and is therefore smooth.
As in~\citel{CurZwa95}{Sec. 3.3, Ex. 3.3.5} we can make a change of variables $\tilde{x}(t)=x(t)-B_{d0} \wdist(t)$ where $B_{d0}\in \Lin(\R,X)$ is such that $\Dom(\mc{A})\subset \ran(B_{d0})$ and $\mc{B}_dB_{d0}=I$. This allows us to write the PDE system~\eqref{eq:plantintro} in the form
\eq{
  \dot{\tilde{x}}(t) &= A \tilde{x}(t) + Bu(t)+ \pmat{\mc{A}B_{d0},\, -B_{d0}}\pmat{\wdist(t)\\\wdistdot(t)}\\
  y(t)&= C \tilde{x}(t) 
+ \pmat{CB_{d0},\, 0} \pmat{\wdist(t)\\\wdistdot(t)}
}
where $Ax = \mc{A}x$ for $x\in \Dom(A) := \Dom(\mc{A})\cap \ker(\mc{B}_d)$. Since $B_d:=[\mc{A}B_{d0},\, -B_{d0}]\in \Lin(\R^2,X)$ and $(\wdist(t),\wdistdot(t))^T$ is of the form~\eqref{eq:yrefwdistwd}, this system is indeed of the form~\eqref{eq:plantintro} and the results in Section~\ref{sec:Controllers} are therefore applicable for~\eqref{eq:HeatCon}. Note that it is not necessary to compute the expressions of the operators $B_{d0}$, $\mc{A}B_{d0}$ and $CB_{d0}$ since the robustness of the controller implies that the disturbance signal is rejected for any disturbance input and feedthrough operators.

Now $\Dom(A)=\setm{x\in H^2(0,1)}{x'(0)=x'(1)=0}$ and if 
we choose $V = H^1(0,1)$ with inner product $\iprod{\phi}{\psi}_V=\int_0^1 \phi'(\xi)\overline{\psi'(\xi)}d\xi+ \int_0^1 \phi(\xi)\overline{\psi(\xi)}d\xi$,
then the operator $A$ is defined by the bounded and coercive sesquilinear 
form $a:V \times V\to \C$
\begin{align*}
  a(\phi, \psi) = \langle \ga(\cdot) \phi', \, \psi' \rangle + \langle \gg(\cdot) \phi, \, \psi \rangle  .
\end{align*}
  We assume $b(\cdot)$ and $c(\cdot)$ are such that $(A,B,C)$ is exponentially stabilizable and detectable, which in this case means that $\iprod{b}{\phi}\neq 0$ and $\iprod{\phi}{c}\neq 0$ for any eigenfunctions $\phi$ of $A$ associated to unstable eigenvalues~\citel{CurZwa95}{Sec. 5.2}.

  For the spatial discretization of~\eqref{eq:HeatCon} we use the Finite Element Method with piecewise linear basis functions. These approximations have the required property~\eqref{eq:Galerkinassumption} by~\cite{Cia78}.

\subsubsection*{A Simulation Example}
As a numerical example, we consider~\eqref{eq:HeatCon} with parameters
  \eq{
    \ga(\xi) = \frac{2-\xi}{4},
    \quad \gg(\xi) = 12 \xi,
    \quad b(\xi)=4\chi_{(.25,\, .5 )} (\xi),
    \quad c(\xi)=4\chi_{(.5,\, .75 )} (\xi) 
  }
where $\chi_{(a,b)}(\cdot)$ denotes the characteristic function on the interval $(a,b)$.
The control $u(t)$ and observation $y(t)$ act on the subintervals $(.25,.5)$ and $(.5,.75)$ of $\Omega$, respectively.
We consider the reference and disturbance signals 
  \eq{
    \yref(t) = \cos (t) + \frac{1}{2}\sin (2t) - 2 \cos (3t), \qquad
    \wdist (t) = \frac{1}{4} \sin (4t). 
  }
  The set of frequencies in~\eqref{eq:yrefwdist} in $\set{\gw_k}_{k=0}^q$ is  $\set{1,2,3,4}$ with $q=4$ and $n_k=1$ for all $k\in \List{4}$.
We modify the internal model in Section~\ref{sec:Controllers} in such a way that the parts associated to $\gw_0=0$ are omitted.

We construct the dual observer-based controller in Section~\ref{sec:DOBScontr}. 
In the absence of the frequency $0$ the internal model has dimension $\dim Z_0=p\times q\times 2=8$.
In the controller construction, we use a Finite Element approximation of order $N=300$. The parameters of the stabilization are chosen as 
\eq{
  \ga_1= 0, \;
  \ga_2= .95, \;
  Q_1= 
  Q_2= I_X, \;
  R_1= 
  R_2= 1\in\R
.  
}
Finally, we use balanced truncation with order $r=12$.
The system~\eqref{eq:HeatCon} is unstable with a finite number of eigenvalues with positive real parts. 

For the simulation of the original system~\eqref{eq:HeatCon} we use a Finite Element approximation of order $n = 1000$. Figure~\ref{fig:1Dheatspectrum} depicts parts of the spectrum of the original system, the closed-loop system without model reduction in the controller (i.e., with $r=N$), and the closed-loop system with model reduction of order $r=12$.
\begin{figure}[ht]  
  \begin{center}
\includegraphics[width=0.65\linewidth]{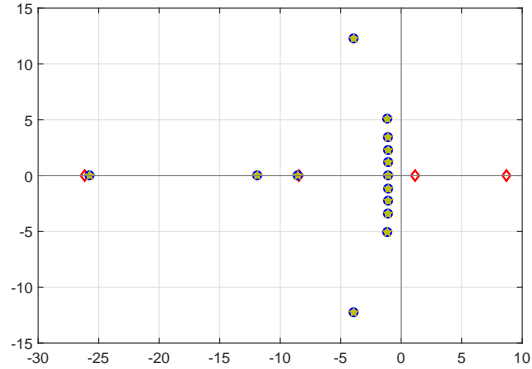}
\caption{Spectra of the uncontrolled system (red diamonds) and the closed-loop system with $r=N=300$ (yellow stars) and $r=12$ (blue circles).
    }
    \label{fig:1Dheatspectrum}
  \end{center}
\end{figure}

 The output of the controlled system for the initial states $x_0(\xi)=-\xi/10$ and $z_0=0\in \R^{8+12}$ of the system and the controller is depicted in Figure~\ref{fig:1Dheatoutput}.

\begin{figure}[ht]  
  \begin{center}
    \includegraphics[width=0.75\linewidth]{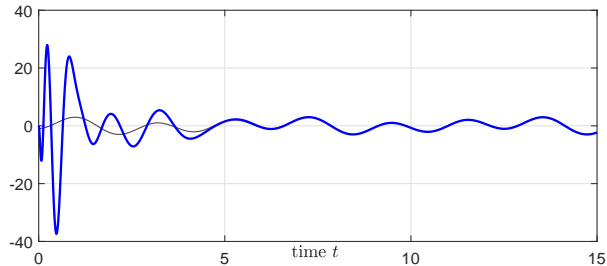}
    \caption{Output of the 1D heat equation with the dual observer-based controller.}
    \label{fig:1Dheatoutput}
  \end{center}
\end{figure}

\subsection{A 2D Reaction--Diffusion--Convection Equation}
\label{sec:2Dheat}
We consider a controlled reaction--diffusion--convection equation 
on a 2-dimensional bounded domain $\Omega \subset \R^2$ with $C^\infty$-smooth boundary $ \partial \Omega$ and assume
$\Omega$ is located locally on one side of $ \partial \Omega$. The PDE is defined as (see~\citel{BanKun84}{Sec. 3})
\begin{subequations}
\label{eq:ParaSys}
\begin{align}
  \frac{\partial x}{\partial t}(\xi,t) &=  \nabla( \ga(\xi) \nabla x(\xi,t) )+ \nabla \cdot (\gb(\xi) x(\xi,t) ) \\
  &\hspace{4cm} + \gg(\xi) x(\xi,t)    + f(\xi)+ B u(t), \\
  x (\xi,t) &= 0, \quad \mbox{on} ~ \xi\in\partial\Omega,  \quad x(\xi, 0) = x_0 (\xi) \\
  y(t)&= Cx(\cdot,t)
\end{align}
\end{subequations} 
with state $x:(0,\infty)\times \Omega\to \R$.
The possible source term $f(\xi)$ can be treated as a disturbance input with frequency $\gw_0=0$, and it will be handled by the internal model based controller.
  Here $\ga \in W^{1,\infty}(\Omega, \R)$ with $\ga(\xi)\geq \ga_0>0$ for all $\xi\in\Omega$,
  $\gb=(\gb_1(\cdot),\gb_2(\cdot))^T$ with $\gb_1,\gb_2\in W^{1,\infty}(\Omega;\R)$,
  and $\gg,~f \in L^\infty(\Omega, \R)$.
We assume~\eqref{eq:ParaSys} has $m$ distributed inputs and therefore $u(t)=(u_k(t))_{k=1}^m\in U=\R^m$ and
\eq{
  B u(t) =  \sum_{k=1}^m {u_k(t) b_k(\cdot)}
}
where $b_k(\cdot)\in \Lp[2](\Omega;\R)$ are fixed functions. Similarly we assume the system has $p$ measured outputs so that $y(t) = (y_k(t))_{k=1}^p\in Y=\R^p$ and 
\eq{
  y_k(t)=\int_\Omega x(\xi,t)c_k(\xi)d\xi
}
for some fixed $c_k(\cdot)\in \Lp[2](\Omega;\R)$. 

The system~\eqref{eq:ParaSys} can be written in the form~\eqref{eq:plantintro} on $X=\Lp[2](\Omega;\R)$. If we choose $V=H_0^1(\Omega,\R)$, then the system operator $A$ is determined by the sesquilinear form $a:V\times V\to \C$ such that for all $\phi,\psi\in V$,
\eq{
  a(\phi, \psi) &=
  \iprod{\ga\nabla \phi}{\nabla\psi}_{\Lp[2]}+
  \iprod{\gb \phi}{\nabla\psi}_{\Lp[2](\Omega;\R^2)}
  +\iprod{\gg\phi}{ \psi}_{\Lp[2]}   .
}
Similarly as in~\citel{BanKun84}{Sec. 3} we can deduce that $a(\cdot,\cdot)$ is bounded and coercive.
The input and output operators $B\in \Lin(U,X)$ and $C\in \Lin(X,Y)$ are such that $Bu=\sum_{k=1}^m b_k(\cdot)u_k$ for all $u=(u_k)_k\in U$ and $Cx = \left( \int_\Omega x(\xi)c_k(\xi)d\xi \right)_{k=1}^p$ for all $x\in X$.
  We assume $\set{b_k(\cdot)}_{k=1}^m$ and $\set{c_k(\cdot)}_{k=1}^p$ are such that $(A,B,C)$ is exponentially stabilizable and detectable.
The autonomous source term  $f(\xi)$ is considered as a disturbance input, i.e., we write $f(\cdot)=B_d\wdist(t)$ where $\wdist(t)\equiv 1$ and $B_d=f(\cdot)\in \Lin(\R,X)$.

To discretize the equation using Finite Element method, the domain $\Omega$ is approximated with a polygonal domain $\Omega_D$ and we consider a partition of $\Omega_D$ into non-overlapping triangles. The approximating subspaces $V^N$ are chosen as the span of $N$ piecewise linear hat functions $\phi_k$. The subspaces $V^N$ then have the required property~\eqref{eq:Galerkinassumption} by~\cite{Cia78}.  

\begin{rem}
  Also in the case of the 2D reaction--diffusion--convection equation it would be in addition possible to consider boundary disturbances using the same approach as in Section~\ref{sec:1Dheat}.
\end{rem}

\subsubsection*{A Simulation Example}
As a particular numerical example, we consider
a reaction--diffusion--convection equation on the unit disk $\Omega = \{\xi = (\xi_1, \xi_2) \in \R^2 \mid \xi_1^2 + \xi_2^2 <1\}$ with parameters
\eq{
  \ga(\xi) &= \frac{1}{2},\quad
  \gb(\xi) = \pmat{\cos (\xi_1) - \sin (2 \xi_2)
    \\ \sin (3\xi_1) + \cos (4 \xi_2)},\quad
  \gg(\xi) = 10,
  \quad f  = 0. 
}
We consider~\eqref{eq:ParaSys}  with two inputs and two measurements acting on rectangular subdomains of $\Omega$. More precisely, 
\eq{
  b_1(\cdot)=\chi_{\Omega_1},\;
  b_2(\cdot)=\chi_{\Omega_2},\;
  c_1(\cdot)=\chi_{\Omega_3},\;
  c_2(\cdot)=\chi_{\Omega_4}
}
where
$\Omega_1 = \left(\frac{3}{20},\frac{7}{20} \right) \times \left(\frac{1}{15},\frac{4}{15} \right)$ and $\Omega_2 = \left(\frac{3}{5},\frac{4}{5} \right) \times \left(-\frac{2}{25},\frac{2}{25} \right)  $, 
$\Omega_3 = \left(-\frac{7}{10},-\frac{1}{2} \right) \times \left(-\frac{29}{60},-\frac{11}{60} \right)$, and $\Omega_4 = \left(-\frac{1}{2},-\frac{3}{10} \right) \times \left(\frac{7}{25},\frac{13}{25} \right)$ .   
The configuration of the control inputs and measurements is illustrated in Figure~\ref{fig:2DheatIO}.

\begin{figure}[ht]  
  \hspace{-1cm}
  \begin{minipage}{0.51\linewidth}
    \begin{center}
      \includegraphics[width=.65\linewidth]{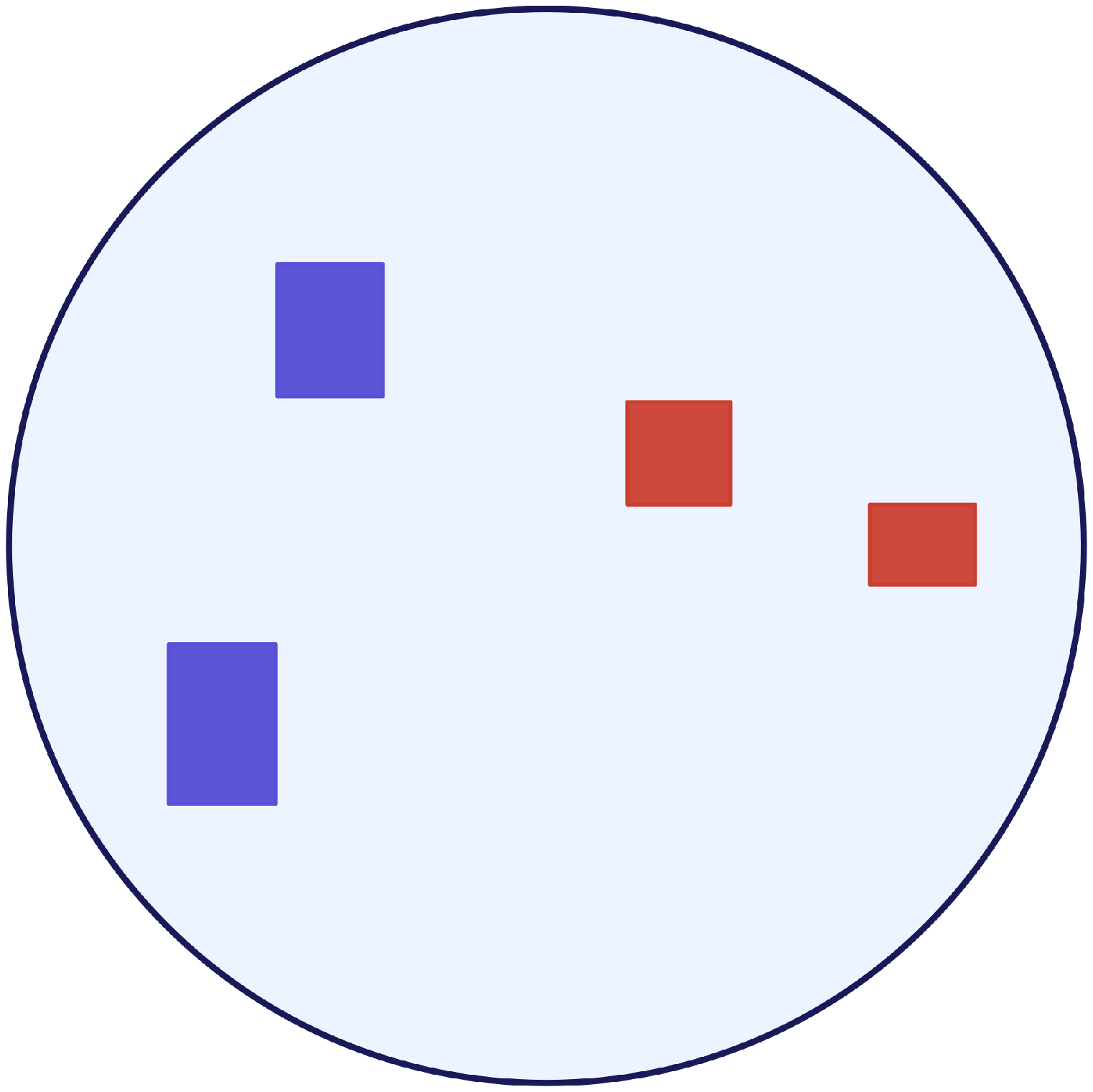} 
      \caption{\small Regions of control (red) and observation (blue).}
    \label{fig:2DheatIO}
    \end{center}
  \end{minipage}
  \hspace{-.5cm}
  \begin{minipage}{0.58\linewidth}
      \includegraphics[width=1\linewidth]{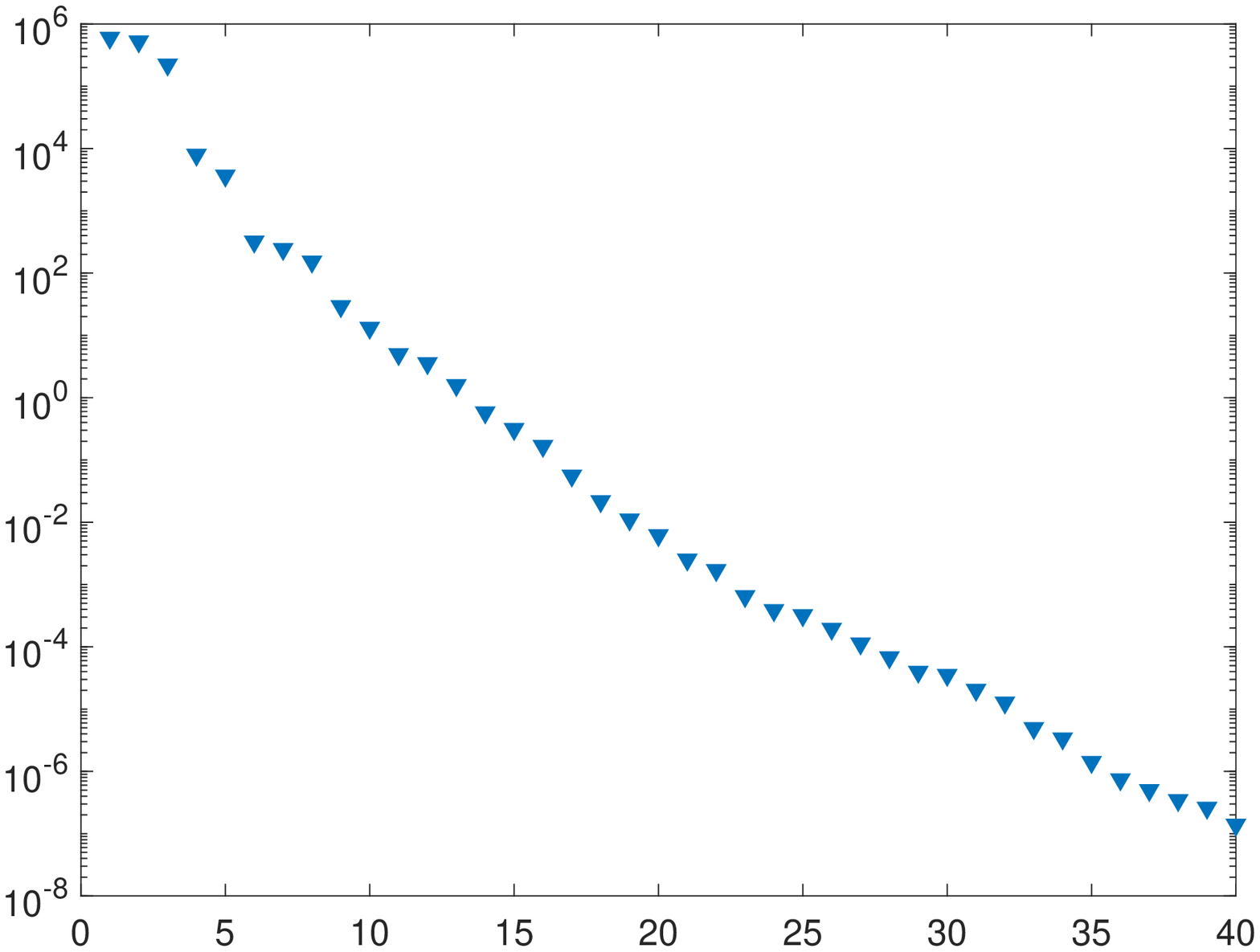}
      \caption{\small Hankel singular values of the Galerkin approximation.} \label{fig:2DheatHankel}
  \end{minipage}
\end{figure}

Our aim is to track a reference signal
\begin{align*}
  \yref(t) &= \pmat{20\cos (t) + 5\sin (2t) - 2 \cos (3t) \\ 45 \sin (10t) - 2\cos (t)}.
\end{align*}
The corresponding set of frequencies in~\eqref{eq:yrefwdist} is $\set{1,2,3,10}$ with $q=4$ and $n_k=1$ for all $k\in \List{4}$. We modify the internal model in Section~\ref{sec:Controllers} in such a way that the parts associated to $\gw_0=0$ are omitted.
We construct the dual observer-based controller in Section~\ref{sec:DOBScontr} using a Galerkin approximation with order $N=1258$ and subsequent balanced truncation with order $r=40$. 
In the absence of the frequency $0$, the internal model has dimension $\dim Z_0=p\times q\times 2=16$.
The FEM discretization is implemented using the Matlab PDE Toolbox functions. 
The parameters of the stabilization are chosen as 
\eq{
  \ga_1= 2, \;
  \ga_2= 2.5, \;
  Q_1= 
  Q_2= I_X, \;
  R_1= 
  R_2= 1\in\R
.  
}
The first Hankel singular values of the Galerkin approximation are plotted for illustration in Figure~\ref{fig:2DheatHankel}.

  In the simulation the original PDE is represented by
   another Finite Element approximation of~\eqref{eq:ParaSys} with order $n = 2072$. 
Figure~\ref{fig:2DheatSpectrum} depicts parts of the spectrum of the uncontrolled system and the closed-loop system. 
  In the plotted region the locations of the closed-loop eigenvalues for the controller without model reduction (i.e., with $r=N$) are very close to those with the final controller.

\begin{figure}[ht]  
  \begin{center}
    \includegraphics[width=0.65\linewidth]{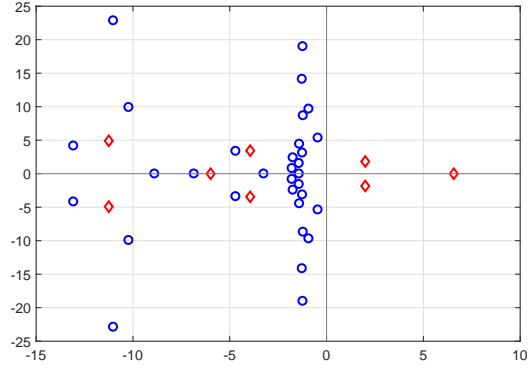}
    \caption{Spectra of the uncontrolled system (red diamonds) and the closed-loop system with $N=1258$ and $r=12$ (blue circles).}
    \label{fig:2DheatSpectrum}
  \end{center}
\end{figure}

The output of the controlled system for the initial states $x_0(\xi)=\cos(5\xi_1)$ and $z_0= 0\in\R^{16+40}$ of the system and the controller is depicted in Figure~\ref{fig:2DheatOutput}.
\begin{figure}[ht]  
  \begin{center}
    \includegraphics[width=.75\linewidth]{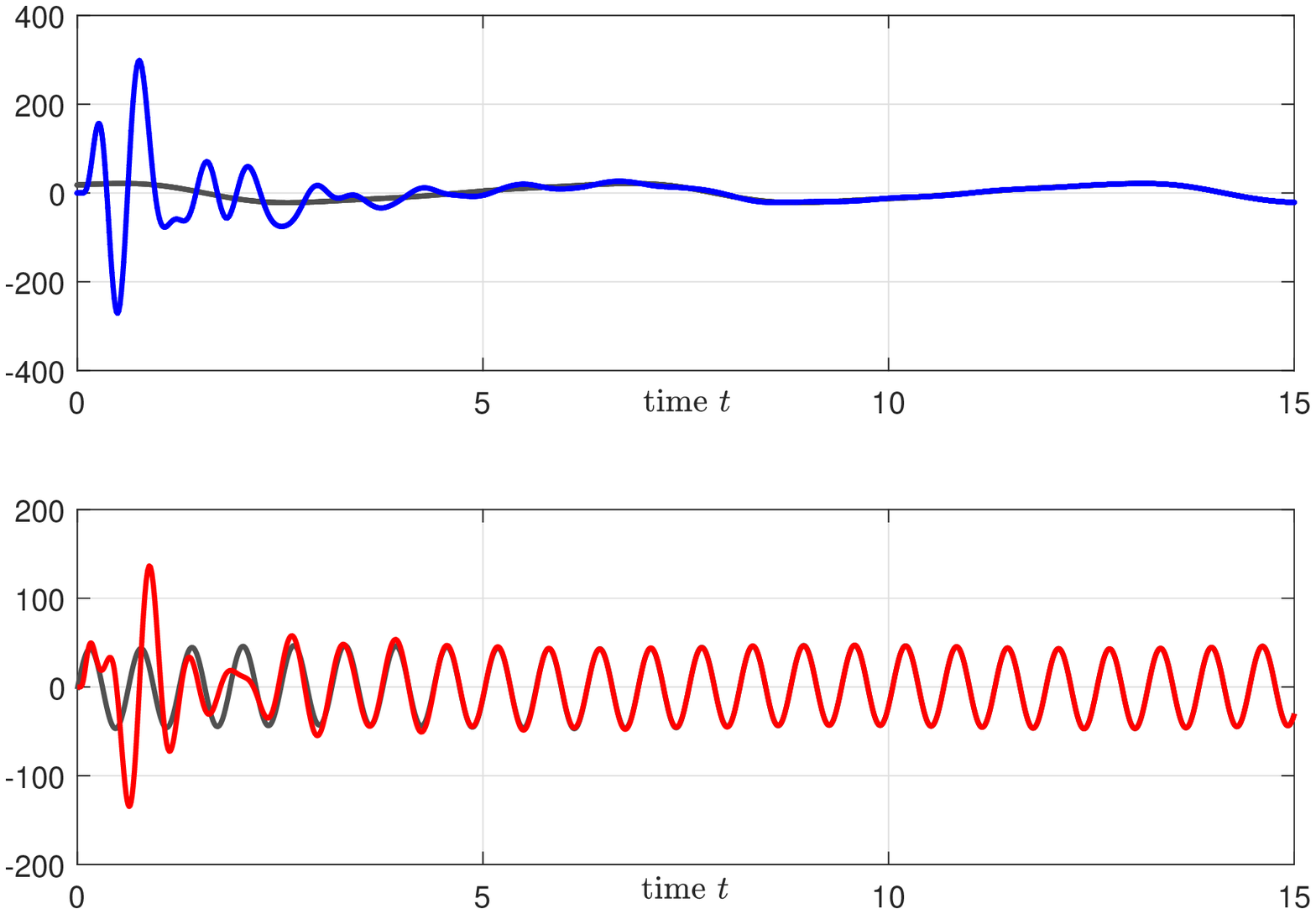}
    \caption{Output $y(t)=(y_1(t),y_2(t))^T$ of the system~\eqref{eq:ParaSys} with the dual observer-based controller (top: $y_1(t)$, bottom: $y_2(t)$).}
    \label{fig:2DheatOutput}
  \end{center}
\end{figure}

\subsection{A Beam Equation with Kelvin--Voigt Damping}
\label{sec:1Dbeam}

Consider a one-dimensional Euler-Bernoulli beam model on $\Omega = (0, \ell)$~\citel{ItoMor98}{Sec. 3}
\begin{subequations}
    \label{eq:EBmodel}
  \eqn{
    \frac{\partial^2 v}{\partial t^2}(\xi,t)  &+  \frac{\partial^2 }{\partial \xi^2}\left( \ga \frac{\partial^2 v}{\partial \xi^2}(\xi,t) + \gb \frac{\partial^3 v}{\partial \xi^2 \partial t}(\xi,t)  \right)
    \\
    &\hspace{1cm} + \gg \frac{\partial v}{\partial t}(\xi,t)  =  B_0u(t) + B_{d0}(\xi) \wdist(t) ,\\
    v(\xi,0)&=v_0(\xi), \qquad \pd{v}{t}(\xi,0)=v_1(\xi),\\
    y(t)&=C_1 v(\cdot,t)+C_2\dot{v}(\cdot,t)
  }
\end{subequations}
where $\ga,\gb,\gg\in\R$ are constants so that $\ga,\gb>0$ and $\gg\geq 0$. 
The input operator is defined by $B_0u=\sum_{k=1}^m b_k(\cdot)u_k$ for $u=(u_k)_{k=1}^m\in U=\R^m$ for some fixed $b_k(\cdot)\in \Lp[2](0,\ell)$
and the disturbance input operator $B_{d0}$ is defined analogously. 
The assumptions on the measurement operators for the deflection $v(\cdot,t)$ and velocity $\dot{v}(\cdot,t)$ are given later.

We consider a situation where the beam is clamped at $\xi = 0$ and free at $\xi=\ell$. The boundary conditions are
\begin{align*}
v(0,t) &= 0,  &&\frac{\partial v}{\partial \xi} (0,t) = 0, \\ 
\left[  \ga \frac{\partial^2 v}{\partial \xi^2} + \gb \frac{\partial^3 v}{\partial \xi^2 \partial t} \right]_{\xi = \ell} &= 0, \quad 
&&\left[  \ga \frac{\partial^3 v}{\partial \xi^3} + \gb \frac{\partial^4 v}{\partial \xi^3 \partial t} \right]_{\xi = \ell} = 0.
\end{align*}

Let $V_0 = \setm{   v \in H^2 (0,\ell) }{ v(0) = v'(0) = 0  }$.
We define an inner product on $V_0$ by
\begin{align*}
\langle \phi_1, \phi_2 \rangle_{V_0} =  \int_0^\ell  \phi_1''(\xi)  \phi_2''(\xi)d\xi, \qquad \forall\phi_1,\phi_2\in V_0.
\end{align*}
Defining the state as $x(t) = ( v(\cdot,t), \dot{v}(\cdot,t) )^T$ the beam model~\eqref{eq:EBmodel} can be written in the form~\eqref{eq:plantintro} on  $X=V_0\times \Lp[2](0,\ell)$ with  
\eq{
  &A = \pmat{ 0 & I \\ -\ga \frac{d^4}{d\xi^4} &  -\gb \frac{d^4}{d\xi^4} - \gg }, \; 
  B = \pmat{0 \\ B_0}, \; 
  B_d = \pmat{0 \\ B_{d0}}\\
  &\Dom(A)= \setm{(v,w)\in V_0\times V_0}{\ga v''+\gb w''\in H_r^2(0,\ell) }
}
where $H_r^2(0,\ell) = \setm{f\in H^2(0,\ell)}{f(\ell)=f'(\ell)=0}$.
We assume the measurement operators $C_1  \in \Lin(V_0,Y)$ and $C_2\in \Lin(\Lp[2](0,\ell),Y)$ for $Y=\R^p$, and thus $C_1v=(C_1^kv)_{k=1}^p$ where $C_1^k\in \Lin(V_0,\R)$  and  $C_2w= (\iprod{w}{c_k^1}_{\Lp[2]})_{k=1}^p$
for some fixed functions $c_k^1(\cdot)\in \Lp[2](0,\ell)$.
Since for any $0<\xi_0\leq \ell$ the point evaluation $C_{\xi_0}v=v(\xi_0)$ is a linear functional on $V_0$, it is in particular possible to consider pointwise tracking of the deflection with $y(t)=v(\xi_0,t)$ in~\eqref{eq:EBmodel}.

Choose $V=V_0\times V_0$. As shown in~\citel{ItoMor98}{Sec. 3} the operator $A$ is defined by a bounded and coercive sesquilinear form $a:V\times V\to \C$ defined so that for all $\phi=(\phi_1,\phi_2)\in V $ and $\psi=(\psi_1,\psi_2)\in V$ we have
\begin{align*}
a (\phi, \psi) 
= -\langle \phi_2, \psi_1  \rangle_{V_0} +   \langle \ga \phi_1 + \gb \phi_2, \psi_2 \rangle_{V_0}  + \gg  \iprod{\phi_2 }{ \psi_2 }_{\Lp[2]} .
\end{align*}

As the Galerkin approximation of~\eqref{eq:EBmodel} we use the Finite Element Method with cubic Hermite shape functions to approximate functions of $V_0$ and $\Lp[2](0,\ell)$ in the spaces $V_0^N$. As shown in~\citel{ItoMor98}{Sec. 3} the approximating subspaces $V^N = V_0^N\times V_0^N$ have the required property~\eqref{eq:Galerkinassumption}.
For additional details on the approximations, see~\citel{XiaBas99}{Sec. 4}.

\subsubsection*{A Simulation Example}

For a numerical example we consider a beam model with $\ell = 7$, $\ga = 0.5$, $\gb= 1$, and $\gg = 2$.
Similarly as in~\citel{ItoMor98}{Sec. 3} we choose $U=\R$ and $B_0=b_1(\cdot)$ with $b_1(\xi)=\xi$, and choose a measurement
\eq{
  y(t) = \int_5^6{v(\xi,t) d\xi},\qquad  \mbox{i.e.,} \quad C_1=\chi_{(5,6)}(\cdot), ~ C_2=0.
}
The disturbance $\wdist(t)$ acts on the interval $(3,6)$ so that $B_{d0}=\chi_{(3,6)}(\cdot)\in \Lin(\R,\Lp[2](0,\ell))$.

With our choices of parameters the damping in the beam model~\eqref{eq:EBmodel} is strong enough to stabilize the system exponentially. 
However, the stability margin of the system is very small.
In such a situation the finite-dimensional low-gain robust controllers~\cite{HamPoh00,RebWei03} 
typically only achieve very limited closed-loop stability margins and slow convergence of the output. In this example we use our controller design to improve the degree of stability of the original model and achieve an improved closed-loop stability margin.

We take the reference signal and disturbance signal 
\begin{align*}
  \yref(t) &= 3 \cos (t)- 2 \cos (3t)  + 15 \sin (5 t)  - 6 \sin (10t), \\
\wdist(t) &= 3 \sin(4t) + 5 \sin (7t).  
\end{align*}
The corresponding set of frequencies in~\eqref{eq:yrefwdist} is $\set{1,3,4,5,7,10}$ with $q=6$ and $n_k=1$ for all $k\in \List{6}$. We modify the internal model in Section~\ref{sec:Controllers} in such a way that the parts associated to $\gw_0=0$ are omitted.
We construct the observer-based controller in Section~\ref{sec:DOBScontr} using a Galerkin approximation with order $N=58$ and subsequent balanced truncation with order $r=10$. 
In the absence of the frequency $0$, the internal model has dimension $\dim Z_0=p\times q\times 2=12$.

  The stability margins in the stabilizability of $(A,B)$ and the detectability of $(C,A)$
are limited because the beam model~\eqref{eq:EBmodel} is known to have an accumulation point of eigenvalues at $\glacc\in\R_-$~\cite{ZhaGuo11}. In particular, the assumptions of the detectability of $(C,A+\ga_1 I)$ and the stabilizability of $(A+\ga_2 I,B)$ can only be satisfied if $0\leq \ga_1,\ga_2< \abs{\glacc}$.
To find the upper bound $\abs{\glacc}$, the spectrum of $A$ can be computed similarly as in~\citel{LuoGuo99book}{Sec. 4.3}. 
In particular,
the eigenvalues $\gl_n\neq -\ga/\gb$ of $A$ are solutions of the quadratic equation $\gl_n^2 -(\gb\eta_n+\gg)\gl_n-\ga\eta_n = 0$, where $\eta_n\in\R_+$ are such that $\phi_n'''' = \eta_n\phi_n$ for some $\phi_n(\cdot)\in H^4(0,\ell)$ satisfying $\phi_n(0)=\phi_n'(0)=\phi_n''(\ell)=\phi_n'''(\ell)=0$. Since $\eta_n\to \infty$ as $n\to \infty$,
a direct computation shows that the eigenvalues $\gl_n$ have a limit 
$\gl_n\to -\ga/\gb=:\glacc$ as $n\to \infty$. Thus the condition on $\ga_1$ and $\ga_2$ becomes $0\leq \ga_1,\ga_2<\abs{\glacc}=\ga/\gb=0.5$.
Motivated by this, the parameters of the stabilization are chosen as 
\eq{
  \ga_1= 
  \ga_2= 0.4, \;
  Q_1= 
  Q_2= I_X, \;
  R_1= 10^{-3},\;
  R_2= 10^3
.  
}

For the simulation of the original system~\eqref{eq:ParaSys} we use another Finite Element approximation of order $n = 140$. 
Figure~\ref{fig:1DbeamSpectrum} depicts parts of the spectrum of the uncontrolled system and the closed-loop system.
  In the plotted region the locations of the closed-loop eigenvalues for the controller without model reduction (i.e., with $r=N$) are very close to those with the final controller.

\begin{figure}[ht]  
  \begin{center}
    \includegraphics[width=.65\linewidth]{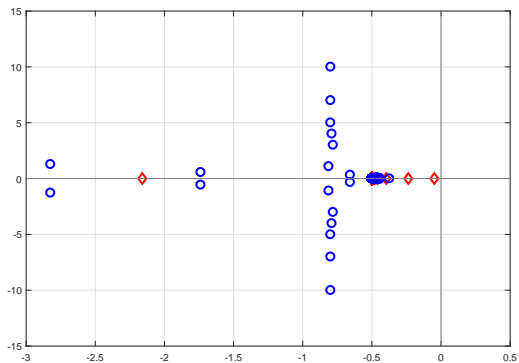}
    \caption{Spectra of the uncontrolled system (red diamonds) and the closed-loop system with $N=58$ and $r=10$ (blue circles).} 
    \label{fig:1DbeamSpectrum}
  \end{center}
\vspace{-.3cm}
\end{figure}

The output of the controlled system for the initial states $x_0(\xi)=\cos(5\xi_1)-2$ and $z_0= -3\cdot \bm{1}\in\R^{12+10}$ of the system and the controller is depicted in Figure~\ref{fig:2DheatOutput}.

\vspace{-.3cm}

\begin{figure}[ht]  
  \begin{center}
    \includegraphics[width=.75\linewidth]{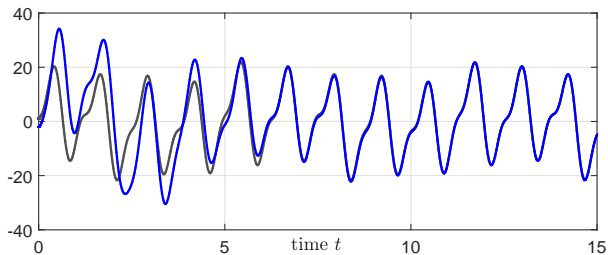}
    \caption{Output (blue) of the beam model with the observer-based controller and the reference signal (gray).}
    \label{fig_tracking1d_beam}
  \end{center}
\vspace{-.4cm}
\end{figure}

\section{Conclusions}
\label{sec:Conclusions}

We have studied the construction of finite-dimensional low-order controllers for robust output regulation of parabolic PDEs and other infinite-dimensional systems with analytic semigroups. 
We have presented two controller structures constructed using a Galerkin approximation of the control system and balanced truncation.
Theorems~\ref{thm:OBScontr} and~\ref{thm:DOBScontr} guarantee that the controllers achieve robust output tracking and disturbance rejection provided that the orders $N$ and $r\leq N$ of the Galerkin approximation and the model reduction, respectively, are sufficiently high, but the methods used in the proofs do not provide any concrete bounds for the sizes of $N$ and $r$. The rate of decay of the Hankel singular values can be used together with Lemma~\ref{lem:ROMconv} as a rough indicator of how much reduction is possible in the last step of the controller construction algorithm. Deriving precise and reliable lower bounds $N$ and $r$ to guarantee closed-loop stability is an important topic for future research. 
Another open question is to develop a way to reliably estimate the stability margin of the closed-loop system for particular orders $N$ and $r$.

\appendix
\enlargethispage{1ex}

\section{Additional Lemmata}
\label{sec:Appendix}

\begin{lem}
  \label{lem:CLvsOLconv}
  The system $(A^n,B^n,C^n)$ converges to $(A,B,C)$ in the graph topology if and only if for some $Q\in \Lin(Y,U)$ the system $(A^n+B^nQC^n,B^n,C^n)$ converges to $(A+BQC,B,C)$ in the graph topology.
\end{lem}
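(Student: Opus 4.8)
The plan is to realise static output feedback as a homeomorphism of transfer functions in the graph topology, implemented at the level of right coprime factorizations over $M(\Hinf)$ by the linear map $(N,D)\mapsto(N,D-QN)$. First I would record that both $(A,B,C)$ and $(A+BQC,B,C)$ are exponentially stabilizable and detectable, since these properties survive the bounded perturbation $A\mapsto A+BQC$: indeed $(A+BQC)+B(F-QC)=A+BF$ and $(A+BQC)+(L-BQ)C=A+LC$. Hence the transfer functions $G(\gl)=CR(\gl,A)B$ and $G_Q(\gl)=CR(\gl,A+BQC)B$ admit right coprime factorizations over $M(\Hinf)$, and graph-topology convergence of such systems is, by the standard characterization, equivalent to the existence of right coprime factorizations $G_n=N_nD_n\inv$ and $G=ND\inv$ in $M(\Hinf)$ with $\norm{(N_n,D_n)-(N,D)}_{M(\Hinf)}\to0$ (cf.~\cite{Mor94,Cur03} and the references therein).

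The key observation is an algebraic identity. A Woodbury-type resolvent computation gives $G_Q=G(I-QG)\inv$, and $\det(I-QG)\not\equiv 0$ because $\rho(A+BQC)\neq\emptyset$, so $G_Q$ is a well-defined meromorphic matrix function. Moreover, if $G=ND\inv$ is a right coprime factorization over $M(\Hinf)$ with $X,Y\in M(\Hinf)$ satisfying $XN+YD=I$, then $I-QND\inv=(D-QN)D\inv$ gives $G_Q=N(D-QN)\inv$, while $(X+YQ)N+Y(D-QN)=XN+YD=I$ shows that $(N,D-QN)$ is again a right coprime factorization, now of $G_Q$. Consequently the map $\Phi_Q\colon(N,D)\mapsto(N,D-QN)$ on pairs in $M(\Hinf)$ is linear, bounded with $\norm{\Phi_Q}\le1+\norm{Q}$, and has bounded inverse $\Phi_{-Q}$; it is therefore a homeomorphism carrying right coprime factorization pairs onto right coprime factorization pairs and taking the transfer function of $(A,B,C)$ to that of $(A+BQC,B,C)$.

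Finally I would assemble the equivalence. If $(A^n,B^n,C^n)\to(A,B,C)$ in the graph topology, choose right coprime factorizations $(N_n,D_n)\to(N,D)$; applying the continuous map $\Phi_Q$ yields right coprime factorizations $(N_n,D_n-QN_n)\to(N,D-QN)$ of the transfer functions of $(A^n+B^nQC^n,B^n,C^n)$ and $(A+BQC,B,C)$, so the latter converge in the graph topology --- and since $Q$ was arbitrary this holds for some (in fact every) $Q$. Conversely, if the feedback systems converge in the graph topology for some $Q$, apply $\Phi_{-Q}=\Phi_Q\inv$ to their right coprime factorizations and use that feeding back $-Q$ into $(A+BQC,B,C)$ returns $(A,B,C)$, to deduce $(A^n,B^n,C^n)\to(A,B,C)$. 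I expect the only delicate points to be checking that $(N,D-QN)$ is a genuine coprime factorization --- the Bezout identity above together with $\det(D-QN)\not\equiv 0$ --- and pinning down the precise statement of the characterization of the graph topology via closeness of coprime factorizations; everything else is a routine continuity argument.
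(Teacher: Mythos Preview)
Your argument is correct. The paper, however, disposes of the lemma in a single sentence by citing known results: convergence of open-loop systems in the graph topology is equivalent to convergence of the corresponding closed-loop systems, as stated in \cite[Prop.~7.3.40]{Vid85} and \cite[Thm.~3.3]{Zhu89}. What you have written is essentially a self-contained proof of that cited fact in the special case of static output feedback $u=Qy$: the homeomorphism $\Phi_Q:(N,D)\mapsto(N,D-QN)$ on right coprime factorization pairs is precisely the mechanism underlying those references. Your route buys self-containment and makes the continuity transparent at the level of $M(\Hinf)$, while the paper's route is a one-line appeal to the literature; mathematically the content is the same.
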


\begin{proof}
The result follows from the property that in the graph topology the convergence of open loop systems is equivalent to the convergence of closed-loop systems~\cite[Prop. 7.3.40]{Vid85}, \cite[Thm. 3.3]{Zhu89}.
\end{proof}

\begin{lem}
  \label{lem:BlockOpStabDet}
  Suppose Assumption~\textup{\ref{ass:Psurj}} and the standing assumptions are satisfied and $G_1$ is as in Sections~\textup{\ref{sec:OBScontr}} and~\textup{\ref{sec:DOBScontr}}. 
  Let $\ga \geq 0$ be such that $(A+\ga I,B,C)$ is exponentially stabilizable and detectable.
  Then the following hold.
\begin{itemize}
  \item[\textup{(a)}] 
    In the case of the observer-based controller, the pair
  \eqn{
    \label{eq:BlockOpPair1}
    \left(\pmat{G_1&G_2C\\0&A}+\ga I,\pmat{G_2D\\B}  \right)
  }
  is exponentially stabilizable.
\item[\textup{(b)}] In the case of the dual observer-based controller, the pair
  \eqn{
    \label{eq:BlockOpPair2}
    \left(\pmat{DK_1,\; C},\pmat{G_1&0\\BK_1&A} + \ga I  \right)
  }
  is exponentially detectable.
\item[\textup{(c)}] 
  If $K=[K_1,K_2]$ stabilizes the pair~\eqref{eq:BlockOpPair1}, then $(K_1,G_1)$ is observable.
  If $\mc{G}_2=\pmatsmall{G_2\\L}$ stabilizes the pair~\eqref{eq:BlockOpPair2}, then $(G_1,G_2)$ is controllable.
\end{itemize}
\end{lem}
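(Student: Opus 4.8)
The plan is to treat the three parts separately: (a) and (c) are essentially self-contained, and (b) is a dual version of the argument for (a).

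\textbf{Part (a).} Since $(A+\ga I,B)$ is exponentially stabilizable, I would first pick $K_0\in\Lin(X,U)$ with $\hat A:=A+\ga I+BK_0$ exponentially stable and apply the preliminary state feedback $u\mapsto K_0x+u$; because state feedback preserves exponential stabilizability, the claim reduces to exponential stabilizability of $\bigl(\pmatsmall{G_1+\ga I & G_2\hat C\\0&\hat A},\,\pmatsmall{G_2D\\B}\bigr)$, where $\hat C:=C+DK_0$. This operator is block upper triangular with a finite-dimensional $(1,1)$-block and an exponentially stable $(2,2)$-block, so its spectrum is $(\gs(G_1)+\ga)\cup\gs(\hat A)$, and outside a left half plane it has only the finitely many eigenvalues $i\gw_k+\ga$, $k=0,\dots,q$. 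By the standard reduction of exponential stabilizability to the finite-dimensional unstable spectral subspace, it then suffices to verify the Hautus condition at each $\gl=i\gw_k+\ga$, i.e.\ that no nonzero functional $(v^\ast,w^\ast)$ has $v^\ast(G_1-i\gw_k)=0$, $w^\ast=v^\ast G_2\hat CR(\gl,\hat A)$ and $v^\ast G_2D+w^\ast B=0$. Substituting $w^\ast$ into the last equation gives $v^\ast G_2\bigl(\hat CR(i\gw_k+\ga,\hat A)B+D\bigr)=v^\ast G_2P_{K_0}(i\gw_k)=0$; since $P_{K_0}(i\gw_k)$ is surjective by Assumption~\ref{ass:Psurj} (surjectivity of $P_K(i\gw_k)$ being independent of the stabilizing $K$), this forces $v^\ast G_2=0$, and then $v^\ast(G_1-i\gw_k)=0$ together with controllability of $(G_1,G_2)$ gives $v^\ast=0$ and $w^\ast=0$.

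\textbf{Part (b).} Here the plan is to run the dual argument directly on the detectability pair — rather than literally invoking (a) — because the dual controller carries the gains $K_1^{k1}$ in place of identities. I would replace $\pmatsmall{G_1&0\\BK_1&A}+\ga I$ by $\pmatsmall{G_1&0\\BK_1&A}+\ga I+\pmatsmall{0\\L_0}[DK_1,\,C]$, with $L_0$ a stabilizing output injection for $(C,A+\ga I)$ (output injection preserves detectability), so the only potentially unstable modes are again $i\gw_k+\ga$. Solving the defining equations of a null vector at $\gl=i\gw_k+\ga$ yields $v\in\ker(G_1-i\gw_k)$, $w=R(i\gw_k,A+L_0C)(B+L_0D)K_1v$, and the residual condition $\bigl(CR(i\gw_k,A+L_0C)(B+L_0D)+D\bigr)K_1v=P_{L_0}(i\gw_k)K_1v=0$. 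An eigenvector $v$ of $G_1=\diag(J_0^Y,\dots,J_q^Y)$ at $i\gw_k$ is supported in the $k$-th diagonal block, and $K_1^k$ reads only its leading $p$ coordinates, so $K_1v=K_1^{k1}\zeta$ for some $\zeta\in\C^p$; since $K_1^{k1}$ is chosen so that $P(i\gw_k)K_1^{k1}$ — equivalently $P_{L_0}(i\gw_k)K_1^{k1}$, by the footnote identity and, when $i\gw_k\notin\rho(A)$, by the $P_L$-construction — is boundedly invertible (possible precisely because of Assumption~\ref{ass:Psurj}), we get $\zeta=0$, hence $K_1v=0$, hence $v=0$ by observability of $(K_1,G_1)$. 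The hard part here will be the bookkeeping: tracking the shift by $\ga I$ and the substitution of $A$ by $A+L_0C$ through these identities, verifying the block-support claim for the eigenvectors of $J_k^Y$ (so that only the factor $K_1^{k1}$ remains), and handling the eigenvalue $-i\gw_k$ of $\Omega_k$ as well, for which invertibility of $P_{L_0}(-i\gw_k)K_1^{k1}$ follows from the real structure of the system and a real choice of $K_1^{k1}$ ($K_1^{k1}=I_p$ when $m=p$).

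\textbf{Part (c).} For both assertions I would argue by exhibiting a mode that survives the feedback. If $K=[K_1,K_2]$ exponentially stabilizes~\eqref{eq:BlockOpPair1} but $(K_1,G_1)$ is not observable, take $v\neq0$ with $G_1v=i\gw_kv$ and $K_1v=0$; a direct computation gives $\bigl(\pmatsmall{G_1&G_2C\\0&A}+\ga I+\pmatsmall{G_2D\\B}K\bigr)\pmatsmall{v\\0}=(i\gw_k+\ga)\pmatsmall{v\\0}$, so the closed-loop operator has the eigenvalue $i\gw_k+\ga$ with real part $\ga\geq0$, contradicting exponential stability; hence $(K_1,G_1)$ is observable. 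Dually, if $\GG_2=\pmatsmall{G_2\\L}$ stabilizes~\eqref{eq:BlockOpPair2} but $(G_1,G_2)$ is not controllable, take a left eigenvector $v^\ast$ of $G_1$ at $i\gw_k$ with $v^\ast G_2=0$; then $(v^\ast,\,0)\bigl(\pmatsmall{G_1&0\\BK_1&A}+\ga I+\pmatsmall{G_2\\L}[DK_1,\,C]\bigr)=(i\gw_k+\ga)(v^\ast,\,0)$, again contradicting exponential stability.
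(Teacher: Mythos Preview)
Your argument is correct. Part~(c) matches the paper's proof essentially verbatim. For (a) and (b) you take a different route from the paper: after a preliminary state feedback (resp.\ output injection) that stabilizes the $(2,2)$-block, you verify the Hautus condition at the finitely many unstable eigenvalues $\pm i\gw_k+\ga$, reducing everything to surjectivity of $P_{K_0}(i\gw_k)$ in (a) and invertibility of $P_{L_0}(i\gw_k)K_1^{k1}$ in (b). The paper instead proves (b) constructively: it solves the Sylvester equation $HG_1=A_LH+B_LK_1$, sets the injection $L=L_1+HG_2$ with a $G_2$ chosen to make $G_1+G_2(CH+DK_1)$ Hurwitz, and block-triangularizes the resulting operator via the similarity $\pmatsmall{I&0\\H&-I}$; part~(a) is then obtained by duality. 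Your Hautus approach is more elementary and avoids the Sylvester equation, whereas the paper's construction explicitly exhibits the stabilizing gain. Two minor remarks: in (a) you should also treat the eigenvalues $-i\gw_k+\ga$ (you flag this only in (b)); and in (b) the step $\zeta=0\Rightarrow v=0$ already follows from the explicit eigenvector form $v=((\zeta,\pm i\zeta),0,\dots,0)^T$, so the final appeal to observability of $(K_1,G_1)$ is harmless but redundant.
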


\begin{proof}
  We can assume $\ga=0$, since otherwise we may consider $\tilde{A}:= A+\ga I$ and $\tilde{G}_1:= G_1+\ga I$.
  We begin by proving part (b). Due to our assumptions we can choose $L_1$ so that $A+L_1C$ is exponentially stable and $P_L(i\gw_k)=CR(i\gw_k,A_L)B_L+D$ with $A_L=A+L_1C$ and $B_L=B+L_1D$ is surjective for every $k\in\List[0]{q}$.
  Choose $L=L_1+HG_2$ where $H$ is the unique solution of the Sylvester equation $HG_1=A_L H+B_L K_1$ and $G_2\in \Lin(Y,Z_0)$ is such that the matrix $G_1+G_2(CH+DK_1)$ is Hurwitz. 
  The choice of $G_2$ is possible provided that the pair $(CH+DK_1,G_1)$ is observable. To see that this is true, let $k\in \List[0]{q}$ and $0\neq \phi_k\in \ker(\pm i\gw_k-G_1)$. Since $H$ is the solution of the Sylvester equation and $G_1$ and $K_1$ have special structure, we have $\phi_k = (\phi_k^0,\pm i\phi_k^0,0,\ldots,0)^T$, $H\phi_k = R(\pm i\gw_k,A_L)B_LK_1^{k1}\phi_k^0$ and 
\eq{
  (CH+DK_1)\phi_k
  = (CR(\pm i\gw_k,A_L)B_L+D)K_1^{k1}\phi_k^0\neq 0 
}
by the choices of $K_1^{k1}\in \Lin(Y,U)$.
Thus the pair $(CH+DK_1,G_1)$ is observable.
A direct computation then shows that 
  \eq{
    \MoveEqLeft \pmat{I&0\\H&-I}  \left(\pmat{G_1&0\\BK_1&A}+ \pmat{G_2\\L}\pmat{DK_1, \; C}\right)\pmat{I&0\\H&-I}\\
    &= \pmat{G_1+G_2(CH+DK_1)&-G_2C\\0&A+L_1C},
  }
  which generates an exponentially stable semigroup.

Part (a) can be proved analogously by considering adjoint operators.
To prove (c), assume $K=[K_1,K_2]$ stabilizes the pair~\eqref{eq:BlockOpPair1}. If $(K_1,G_1)$ is not controllable, there exist $k\in \List[0]{q}$ and $0\neq\phi_k \in \ker(\pm i\gw_k-G_1)$ such that $K_1\phi_k=0$. Then we also have
  \eq{
    \left(\pmat{G_1&G_2C\\0&A}+\pmat{G_2D\\B}\pmat{K_1,K_2}  \right) \pmat{\phi_k\\0} = \pm i\gw_k \pmat{\phi_k\\0},
  }
  which contradicts the assumption that $[K_1,K_2]$ stabilizes~\eqref{eq:BlockOpPair1}.
The second claim follows similarly by considering adjoint operators.
\end{proof}

\subsection*{Acknowledgement}

The authors would like to express their gratitude to Petteri Laakkonen for providing background information on model reduction methods and convergence of transfer functions, and for carefully examining the manuscript.

\end{document}